\def\@tocline#1#2#3#4#5#6#7{\relax
  \ifnum #1>\c@tocdepth % then omit
  \else
    \par \addpenalty\@secpenalty\addvspace{#2}%
    \begingroup \hyphenpenalty\@M
    \@ifempty{#4}{%
      \@tempdima\csname r@tocindent\number#1\endcsname\relax
    }{%
      \@tempdima#4\relax
    }%
    \parindent\z@ \leftskip#3\relax \advance\leftskip\@tempdima\relax
    \rightskip\@pnumwidth plus4em \parfillskip-\@pnumwidth
    #5\leavevmode\hskip-\@tempdima
      \ifcase #1
       \or\or \hskip 1.8em \or \hskip 2em \else \hskip 3em \fi%
      #6\nobreak\relax
    \dotfill\hbox to\@pnumwidth{\@tocpagenum{#7}}\par
    \nobreak
    \endgroup
  \fi}
\newtheorem{theorem}{Theorem}[section]
\newtheorem{lemma}[theorem]{Lemma}
\newtheorem{proposition}[theorem]{Proposition}
\newtheorem{corollary}[theorem]{Corollary}
\newtheorem{definition}{Definition}[section]
\theoremstyle{remark}
\newtheorem{remark}[theorem]{Remark}
\theoremstyle{definition}
\numberwithin{equation}{section}
\newcommand{\R}{\ensuremath{\mathbb{R}}}
\newcommand{\N}{\ensuremath{\mathbb{N}}}
\newcommand{\Z}{\ensuremath{\mathbb{Z}}}
\newcommand{\veps}{\varepsilon}
\newcommand{\Div}{\mbox{div}}
\newcommand{\plap}{\ensuremath{\Delta_p}}
\newcommand{\Mmvp}{\mathcal{M}_r^p}
\newcommand{\MVPrh}{\Delta_{p}^h}
\newcommand{\U}{\mathcal{U}}
\newcommand{\G}{\tilde{G}}
\newcommand{\dd}{\,\mathrm{d}}
\newcommand{\lap}{\Delta}
\newcommand{\Grid}{\mathcal{G}_h}
\renewcommand{\vec}{}
\begin{document}

\title[Finite differences for the $p$-Laplacian]{A finite difference method\\  for the variational $p$-Laplacian}

\author[F.~del~Teso]{F\'elix del Teso}

\address[F. del Teso]{Departamento de An\'alisis Matem\'atico y Matem\'atica Aplicada, 
Universidad Complutense  de Madrid, 28040 Madrid, Spain} 

\email[]{fdelteso\@@{}ucm.es}

\urladdr{https://sites.google.com/view/felixdelteso}

\keywords{$p$-Laplacian, finite difference, mean value property, nonhomogeneous Dirichlet problem, viscosity solutions, dynamic programming principle.}

\author[E.~Lindgren]{Erik Lindgren}

\address[E. Lindgren]{Department of Mathematics, Uppsala University, Box 480, 751 06 Uppsala, Sweden}
\email[]{erik.lindgren\@@{}math.uu.se}
\urladdr{https://sites.google.com/view/eriklindgren}

\subjclass[2010]{
65N06,  %	Finite difference methods for boundary value problems involving PDEs
 35J60, %Nonlinear elliptic equations
 35J70, %Degenerate elliptic equations
 35J75, %Singular elliptic equations
 35J92, %Quasilinear elliptic equations with p-Laplacian 
 35D40, %Viscosity solutions
 35B05. %Oscillation, zeros of solutions, mean value theorems, etc. in context of 
 }

\begin{abstract}\noindent   
 We propose a new monotone finite difference discretization for the variational $p$-Laplace operator, 
\[
\plap u=\Div(|\nabla u|^{p-2}\nabla u),
\]
and present a convergent numerical scheme for related Dirichlet problems. The resulting nonlinear system is solved using two different methods: one based on Newton-Raphson and one explicit method.
Finally, we exhibit some numerical simulations supporting our theoretical results. 
 
 To the best of our knowledge, this is the first monotone finite difference discretization of the variational $p$-Laplacian
 and also the first time that nonhomogeneous problems for this operator can be treated numerically with a finite difference scheme.
\end{abstract}

\maketitle
{\large
\tableofcontents 
}

\section{Introduction and main results} \label{sec:intro}

In the recent paper \cite{dTLi20}, we studied a new\footnote{This mean value formula was independently derived for $p\geq 2$ in \cite{BS18},
see Proposition 2.10 and Theorem 2.12 therein.} mean value formula (MVF) for the variational  $p$-Laplace operator, 
\begin{equation}\label{eq:pLap}
\lap_p u = \Div(|\nabla u|^{p-2}\nabla u).
\end{equation}
With the notation $J_p(t):=|t|^{p-2}t$ for all $p>1$, the MVF, valid for any $C^2(\R^d)$ function, reads
\begin{equation}\label{eq:MVF}
\frac{1}{D_{d,p} r^p}\fint_{B_r} J_p(u(x+y)-u(x)) \dd y =\lap_p u(x) + o_r(1) \quad \textup{as} \quad r\to0^+.
\end{equation}
Here $D_{d,p}:=\frac{d}{2(d+p)}\fint_{\partial B_1} |y_1|^{p} \dd\sigma(y)$ and $B_r$ denotes the ball of radius $r>0$ centered at 0. 

The aim of this paper is to propose a new monotone finite difference discretization of the $p$-Laplacian based on the asymptotic expansion \eqref{eq:MVF}. We also propose a convergent numerical scheme associated to the nonhomogeneous Dirichlet problem 
\begin{empheq}[left=\empheqlbrace]{align}
-\lap_p u(x) =f(x),&\quad  x\in \Omega,\label{eq:BVP}\\
u(x)=g(x),& \quad  x\in \partial \Omega, \label{eq:BC}
\end{empheq}
The scheme results in a nonlinear system. We propose two methods to solve this system: 1) Newton-Raphson and 2) an explicit method, based on the convergence to a steady state of an evolution problem. We comment the advantages of each one in Section \ref{sec:NonlinearSis}. 
 Finally, we exhibit some numerical tests of the accuracy and convergence of the scheme.

To the best of our knowledge, this is the first monotone finite difference discretization of the variational $p$-Laplacian available in the literature and therefore the first time that nonhomogeneous problems of the form \eqref{eq:BVP}-\eqref{eq:BC} can be treated numerically via finite difference schemes. The monotonicity property (see Lemma \ref{lem:propscheme}) is crucial for the convergence of finite difference schemes in the context of viscosity solutions (see \cite{BS91}). 
It is also worth mentioning that, in contrast to the finite difference schemes for the normalized (or game theoretical) $p$-Laplacian considered earlier (see Section \ref{sec:relres}), our scheme is well suited for Newton-Raphson solvers, which is an advantage when it comes to solving a nonlinear system effectively.

\subsection{Main results} In order to describe our main results we need to introduce some notation. Given a discretization parameter $h>0$, consider the uniform grid defined by $\Grid:= h \Z^d=\{y_\alpha:=h\alpha \, : \, \alpha \in \Z^d\}$. 
Let $r>0$ and consider the following discrete operator
\begin{equation}\label{eq:discPlap}
\MVPrh\phi(x):= \frac{h^d}{D_{d,p}\, \omega_d\, r^{p+d}} \sum_{y_\alpha\in B_r} J_p(\phi(x+y_\alpha)-\phi(x)),
\end{equation}
where $\omega_d$ denotes the measure of the unit ball in $\R^d$.  Throughout the paper, we will assume the following relation between $h$ and $r$:
\begin{equation}\tag{H}
\label{eq:ass}
h=\begin{cases}
o(r^\frac{p}{p-1}), & \quad \textup{if} \quad p \in (1,3)\setminus\{2\},\\
o(r), &  \quad \textup{if} \quad p=2,\\
o(r^{\frac{3}{2}}),&  \quad \textup{if} \quad p\in [3,\infty).\\
\end{cases}
\end{equation}
Our first result regards the consistency of the discretization \eqref{eq:discPlap}.
\begin{theorem}\label{thm:MVF}
Let $p\in(1,\infty)$, $x\in \R^d$  and $\phi\in C^2(B_R(x))$ for some $R>0$.  Assume \eqref{eq:ass}. Then
\begin{equation*}
\MVPrh \phi (x)=\lap_p \phi(x)+o_h(1) \quad \textup{as} \quad r\to0^+.
\end{equation*}
\end{theorem}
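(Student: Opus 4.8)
The plan is to read $\MVPrh\phi(x)$ as a midpoint (Riemann-sum) approximation of the continuous average in the mean value formula \eqref{eq:MVF} and to control the resulting quadrature error. Set $g(y):=\phi(x+y)-\phi(x)$, $F(y):=J_p(g(y))$, and
\[
\mathcal{A}_r\phi(x):=\frac{1}{D_{d,p}\,\omega_d\, r^{p+d}}\int_{B_r}F(y)\dd y .
\]
Since $\fint_{B_r}=(\omega_d r^d)^{-1}\int_{B_r}$, formula \eqref{eq:MVF} gives $\mathcal{A}_r\phi(x)=\lap_p\phi(x)+o_r(1)$; although \eqref{eq:MVF} is stated for $\phi\in C^2(\R^d)$, for $r<R$ only the values of $\phi$ in $B_r(x)\subset B_R(x)$ enter, so replacing $\phi$ by a global $C^2$ extension (which alters neither $\mathcal{A}_r\phi(x)$, $\MVPrh\phi(x)$, nor $\lap_p\phi(x)$) makes it applicable. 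It therefore suffices to establish the quadrature estimate
\[
\Big|\MVPrh\phi(x)-\mathcal{A}_r\phi(x)\Big|=\frac{1}{D_{d,p}\,\omega_d\, r^{p+d}}\Big|h^d\!\!\sum_{y_\alpha\in B_r}\!\!F(y_\alpha)-\int_{B_r}F\Big|\longrightarrow 0
\]
as $r\to0^+$ under \eqref{eq:ass}, i.e. that the quadrature error is $o(r^{p+d})$.

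The decisive point is a cancellation coming from the oddness of $J_p$. Let $Q_\alpha$ be the cube of side $h$ centred at $y_\alpha$, so that $\{y_\alpha\in B_r\}$ and $\Omega_h:=\bigcup_{y_\alpha\in B_r}Q_\alpha$ are both symmetric under $y\mapsto-y$. Writing $F=\Phi+\Psi$ with $\Phi(y)=\tfrac12(F(y)+F(-y))$ even and $\Psi$ odd, the odd part integrates to zero over the symmetric sets $\Omega_h$ and $B_r$ and sums to zero over the symmetric grid set; hence the sum and each integral above are unchanged if $F$ is replaced by $\Phi$, and one may use $F$ or $\Phi$ in each piece below. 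A second-order Taylor expansion of $g$ gives $\Phi(y)=O\big(|\nabla\phi(x)\cdot y|^{p-2}|y|^2\big)$ away from $\Sigma:=\{g=0\}$, so the effective integrand has size $O(r^p)$ rather than the naive $O(r^{p-1})$. This is precisely the cancellation that renders $\int_{B_r}F$ of order $r^{p+d}$; without it the boundary mismatch alone would force $h=o(r^2)$.

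Split the error as $\big(h^d\sum F-\int_{\Omega_h}F\big)+\big(\int_{\Omega_h}F-\int_{B_r}F\big)$, an interior part and a boundary part. The boundary part is supported in a shell of width $\sim h$ around $\partial B_r$; estimating $\Phi=O(r^p)$ there yields a contribution $O(h\,r^{p+d-1})=o(r^{p+d})$ as soon as $h=o(r)$, which \eqref{eq:ass} guarantees in every regime (the near-$\Sigma$ portion of the shell, relevant only for $p<2$, is absorbed into the tube estimate below). For the interior part I apply the midpoint rule on each $Q_\alpha$: by the symmetry of the cube the linear term in the Taylor expansion of $F$ about $y_\alpha$ cancels, leaving a per-cube error $O\big(h^{d+2}\sup_{Q_\alpha}|D^2F|\big)$. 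Since $|D^2F|\lesssim|g|^{p-3}$, for $p\ge3$ one has $\sup_{B_r}|g|^{p-3}\lesssim r^{p-3}$, so the interior error is $O(h^2 r^{p+d-3})$ and vanishes after normalisation exactly when $h=o(r^{3/2})$; the case $p=2$ is analogous with $|D^2F|=|D^2g|\lesssim1$, giving $h=o(r)$. These are the two easy regimes and they reproduce the corresponding lines of \eqref{eq:ass}.

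The genuine difficulty is the regime $1<p<3$, $p\ne2$, where $J_p'(t)=(p-1)|t|^{p-2}$ and hence $D^2F$ blow up along $\Sigma$, the bound $|g|^{p-3}$ being non-integrable across $\Sigma$. The plan is to introduce a threshold $\delta=\delta(h,r)$ and cut $B_r$ into a tube $T_\delta:=\{|g|\le\delta\}$ around $\Sigma$ and its complement. Off the tube, $|g|\ge\delta$ makes the midpoint bound usable, $\sup|D^2F|\lesssim\delta^{p-3}$, contributing $O(h^2 r^d\delta^{p-3})$; inside the tube one abandons cancellation and bounds the sum and the integral separately by $\|F\|_{L^\infty(T_\delta)}\,|T_\delta|\lesssim\delta^{p-1}\cdot\delta\,r^{d-1}=\delta^p r^{d-1}$, using that $T_\delta$ is, when $\nabla\phi(x)\ne0$, a slab of width $\sim\delta/|\nabla\phi(x)|$. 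Choosing $\delta$ to balance these two contributions turns the requirement that their normalised sum tend to zero into a smallness condition on $h$ relative to $r$, of which \eqref{eq:ass} is a (safe) instance. Keeping this competition between quadrature order and singularity strength $p-3$ under control, together with the separate, more degenerate analysis at critical points $\nabla\phi(x)=0$ (where $g=O(|y|^2)$ and $\Sigma$ is a cone rather than a hyperplane), is the step I expect to be the \emph{main obstacle}.
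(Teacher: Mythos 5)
Your global skeleton coincides with the paper's: reduce, via the continuous mean value formula (Theorem 2.1 in \cite{dTLi20}, after a harmless $C^2$ extension), to showing that the quadrature error is $o(r^{p+d})$; split it into a shell of width $O(h)$ around $\partial B_r$ plus an interior part over the union of $h$-cells; exploit the $y\mapsto-y$ cancellation; and for $p=2$ and $p\geq 3$ your per-cell midpoint bound with $|D^2(J_p\circ g)|\lesssim |g|^{p-3}\lesssim r^{p-3}$ reproduces the paper's Step 2 and the exponents $h=o(r)$, $h=o(r^{3/2})$ in \eqref{eq:ass}. But in the range $p\in(1,3)\setminus\{2\}$ --- exactly the range that produces the exponent $p/(p-1)$ in \eqref{eq:ass} --- you do not give a proof: the tube around $\Sigma=\{g=0\}$, the balancing of $\delta$, and above all the degenerate case $\nabla\phi(x)=0$ are announced as ``the main obstacle'' and left unexecuted. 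This is a genuine gap, and the critical-point case is worse than you suggest: for $p<2$ and $\nabla\phi(x)=0$ one has $|g(y)|=O(|y|^2)$, so within $B_r$ the whole ball lies in any tube of width $\gtrsim r^2$, and your crude bound $\|F\|_{L^\infty(T_\delta)}|T_\delta|$ gives, after dividing by $r^{p+d}$, an error of order $r^{2(p-1)}/r^{p}=r^{p-2}\to\infty$. Abandoning cancellation is not an option there, so the ``separate, more degenerate analysis'' you defer would essentially amount to redoing the continuous estimates at the discrete level.

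The paper disposes of this entire range with a much lighter tool that your plan misses: H\"older regularity of $J_p$ and $J_p'$, applied node by node. For $p\in(1,2)$, $J_p$ is $(p-1)$-H\"older, so for $y$ in the cell centred at $y_\alpha$, $|J_p(\phi(x+y_\alpha+y)-\phi(x))-J_p(\phi(x+y_\alpha)-\phi(x))|\leq C|y|^{p-1}\leq Ch^{p-1}=o(r^p)$ precisely when $h=o(r^{p/(p-1)})$ --- one line, uniform in $\alpha$, completely insensitive to $\Sigma$ and to whether $\nabla\phi(x)$ vanishes; no tube, no threshold, no case distinction. For $p\in(2,3]$, $J_p'$ is $(p-2)$-H\"older, so the first-order Taylor expansion of $J_p$ at $\rho=\phi(x+y_\alpha)-\phi(x)$ has remainder $C|\eta\cdot y|^{p-1}=o(r^p)$ under the same relation, and the linear terms $(p-1)|\rho|^{p-2}\eta\cdot y$ cancel in the symmetrized combination over the cell. (For what it is worth, your tube balance at points with $\nabla\phi(x)\neq0$ would heuristically require only $h=o(r^{1+3/(2p)})$, which is weaker than \eqref{eq:ass}, so \eqref{eq:ass} is indeed ``safe'' there --- but only the H\"older argument handles the critical points.) Finally, for the shell when $p<2$ you still owe an argument: the even part is bounded by $C|y|^p|\hat{y}\cdot e_1|^{p-2}$, and one must use the integrability of $|\hat{y}\cdot e_1|^{p-2}$ over the sphere (valid since $p-2>-1$; this is (6.2) in \cite{dTLi20}) to get the contribution $O(hr^{p+d-1})$; it cannot simply be ``absorbed into the tube,'' nor does the $\nabla\phi(x)=0$ shell case follow from it (there the paper's route is the crude bound $|F|\lesssim|y|^{2(p-1)}$ on the annulus, which needs only $h=o(r^{3-p})$, again implied by \eqref{eq:ass}).
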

Our second result concerns the finite difference numerical scheme for \eqref{eq:BVP}-\eqref{eq:BC} induced by the discretization \eqref{eq:discPlap}. More precisely, let $\partial\Omega_r:=\{x\in \Omega^c \, :\, \textup{dist}(x,\Omega)\leq r \}$, $\Omega_r=\Omega\cup \partial\Omega_r$ and let $G$ be a continuous extension of $g$ from $\partial \Omega$ to $\partial \Omega_r$. Consider $u_h:\Omega_r\to \R$ such that 
\begin{empheq}[left=\empheqlbrace]{align}
\label{eq:Num1}
-\MVPrh u_h(x) =f(x), &\quad  x\in \Omega,\\
\label{eq:Num2}u_h(x)=G(x), &\quad  x\in \partial \Omega_r.
\end{empheq}
We have the following result.
\begin{theorem}\label{thm:ConvNumSch}
\label{thm:dpp}
Let $p\in(1,\infty)$, $\Omega\subset \R^d$ be a bounded, open and $C^2$ domain,  $f\in C(\overline{\Omega})$ and $g\in C(\partial \Omega)$.  Assume \eqref{eq:ass}.
\begin{enumerate}[\rm (a)]
\item\label{thm:ConvNumSch-item1} Then there exists a unique pointwise solution $u_h\in L^\infty(\Omega_r)$ of \eqref{eq:Num1}-\eqref{eq:Num2} when $r$ is small enough.
\item\label{thm:ConvNumSch-item2} If $u$ is the unique viscosity solution of \eqref{eq:BVP}-\eqref{eq:BC}, then
\[
 \sup_{x\in \overline{\Omega}}\left|u_h(x)-u(x)\right| \to 0 \quad \textup{as} \quad r\to0^+.
\]
\end{enumerate} 
\end{theorem}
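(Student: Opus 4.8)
The plan is to place both parts within the Barles--Souganidis framework for monotone, consistent and stable approximation schemes: consistency is already furnished by Theorem~\ref{thm:MVF} and monotonicity by Lemma~\ref{lem:propscheme}, so the remaining work is to solve the discrete system (part (a)) and to pass to the limit (part (b)). A preliminary observation drives the whole argument for (a): the value $\MVPrh u_h(x)$ depends on $u_h$ only through its values on the translated lattice $x+\Grid$, so the system \eqref{eq:Num1}--\eqref{eq:Num2} decouples over the cosets of $\Grid$, and on each coset only finitely many grid points of the bounded set $\Omega_r$ are involved. The problem is thus finite dimensional on each coset.

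For uniqueness in part (a) I would prove a discrete comparison principle. If $w=u-v$ attains a positive interior maximum $M>0$ at some $x_0\in\Omega$, then $w(x_0+y_\alpha)\le w(x_0)$, i.e.\ $u(x_0+y_\alpha)-u(x_0)\le v(x_0+y_\alpha)-v(x_0)$ for every $y_\alpha\in B_r$; the strict monotonicity of $J_p$ (hence of the scheme) forces equality in every summand, so $w\equiv M$ on $x_0+(\Grid\cap B_r)$. Since each such step propagates the maximum by a distance at most $r$, the set $\{w=M\}$ reaches $\partial\Omega_r$, where $w\le 0$, a contradiction. For existence I would exploit the \emph{variational} nature of the scheme: because $J_p(t)=\tfrac{\mathrm d}{\mathrm dt}\big(\tfrac1p|t|^p\big)$ and the neighbourhood $\Grid\cap B_r$ is symmetric, the equation \eqref{eq:Num1} is, up to a positive multiplicative constant, the Euler--Lagrange equation of the strictly convex, coercive functional
\[
\mathcal{J}(U)=\frac{c}{p}\sum_{x}\sum_{y_\alpha\in B_r}\big|U(x+y_\alpha)-U(x)\big|^p-\sum_{x}f(x)U(x),
\]
minimised over functions with $U=G$ on $\partial\Omega_r$; coercivity follows from a discrete Poincar\'e inequality once the boundary values are fixed. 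The direct method (in finite dimensions) then yields a minimiser, unique by strict convexity, and the $L^\infty$ bound follows from comparison with constant or barrier sub/supersolutions. Requiring $r$ small, together with \eqref{eq:ass} (so $h/r\to0$), is what guarantees that $B_r$ contains enough grid points for the propagation and coercivity arguments to run.

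For part (b) I would run the half-relaxed limits argument. First, stability: a uniform (in $r$ and $h$) $L^\infty$ bound, obtained by sandwiching $u_h$ between discrete sub- and supersolutions. Using the uniform interior and exterior ball conditions guaranteed by the $C^2$ regularity of $\partial\Omega$, I would build at each boundary point smooth barriers which, by Theorem~\ref{thm:MVF}, are \emph{discrete} super/subsolutions up to an $o_r(1)$ error that can be absorbed; the discrete comparison principle then pins the boundary values and gives $\overline u\le g\le\underline u$ on $\partial\Omega$, where
\[
\overline u(x)=\limsup_{\substack{r\to0\\ y\to x}}u_h(y),\qquad \underline u(x)=\liminf_{\substack{r\to0\\ y\to x}}u_h(y).
\]
Next I would show $\overline u$ is a viscosity subsolution and $\underline u$ a viscosity supersolution of \eqref{eq:BVP}--\eqref{eq:BC}. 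For a test function $\phi\in C^2$ touching $\overline u$ from above at an interior point $x_0$, monotonicity of the scheme lets me replace $u_h$ by $\phi$ (up to vanishing error, using $u_h\le\phi$ near $x_0$), and consistency gives $-\lap_p\phi(x_0)\le f(x_0)$ in the limit; the supersolution property is symmetric. Since \eqref{eq:BVP}--\eqref{eq:BC} enjoys a comparison principle (the viscosity solution $u$ being unique by hypothesis), I conclude $\overline u\le u\le\underline u$ on $\overline\Omega$; as $\underline u\le\overline u$ is automatic, $\overline u=\underline u=u$, which is precisely locally uniform convergence, upgraded to uniform convergence on $\overline\Omega$ by the boundary control.

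The main obstacle I expect is concentrated at the boundary layer, and it is twofold. First, turning continuous barriers into genuine \emph{discrete} barriers: one must control the $o_r(1)$ consistency error of Theorem~\ref{thm:MVF} uniformly up to $\partial\Omega$, and, since the Dirichlet data is prescribed on the shrinking collar $\partial\Omega_r$ rather than on $\partial\Omega$ itself, ensure the barriers correctly transmit $g$ through this collar --- this is where the $C^2$ regularity of $\Omega$ and the scaling \eqref{eq:ass} are essential. Second, in the sub/supersolution identification one must accommodate the nonlocal flavour of $\MVPrh$, which averages over an entire $r$-ball rather than over nearest neighbours, so the usual ``plug in the test function'' step requires a careful maximum-over-the-ball comparison combined with the consistency estimate; the degeneracy of $J_p$ for $p\neq 2$ is what makes this error control the delicate point.
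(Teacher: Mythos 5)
Your part (a) is a legitimate alternative to the paper's route and would work: the paper instead constructs the solution by a monotone two-step iteration (Lemma \ref{lemma:L}), starting from a barrier-based subsolution, with the uniform bound of Proposition \ref{Prop:Stability} and uniqueness from the comparison result of Proposition \ref{prop:comparison} (all transported from \cite{dTLi20} via the discrete-measure reformulation of $\MVPrh$), whereas you solve the coset-decoupled finite-dimensional problems by the direct method for the discrete $p$-Dirichlet energy and prove comparison by propagation of maxima. Modulo bookkeeping (you must weight the ordered pairs so that interior--collar and interior--interior edges enter with the same factor, e.g.\ a factor $\tfrac12$ on the double sum, in order that the Euler--Lagrange equation is exactly \eqref{eq:Num1}; note every interior point $x_0$ does have all its backward neighbours $x_0-y_\alpha$ in $\Omega_r$, so the count closes), this part is sound and arguably cleaner than the iteration.

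The genuine gap is in part (b), and it is twofold. First, for $p\in(1,2)$ the expression $\Delta_p\varphi$ is \emph{singular} where $\nabla\varphi$ vanishes; this is precisely why Definition \ref{pvisc} only admits test functions with $|\nabla\varphi(x)|\neq0$ off the touching point, and why the consistency statement, Lemma \ref{lem:propscheme}(b), carries the hypothesis $|\nabla\phi(x)|\neq0$. Your step ``monotonicity lets me replace $u_h$ by $\phi$, and consistency gives $-\Delta_p\phi(x_0)\le f(x_0)$'' therefore fails exactly when $\nabla\varphi(x_0)=0$ and $p<2$: the limit you need is not provided by Lemma \ref{lem:propscheme}, and $-\Delta_p\varphi(x_0)$ need not even be defined. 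The paper devotes two separate cases to this: if $\overline u$ is constant near $x_0$ it tests with the radial function $|x-x_0|^{p/(p-1)+1}$ and invokes the ad hoc Lemma \ref{lem:xbeta} to conclude $-\Delta_p\overline u(x_0)=0\le f(x_0)$; if not, it translates the test function as in Proposition 2.4 of \cite{AR18} to produce touching points with nonvanishing gradient and passes to the limit through the $\lim_{\rho\to0}\inf_{B_\rho\setminus\{x_0\}}$ formulation. Without these ideas your argument covers only $p\ge2$. Second, you justify the final comparison by ``the viscosity solution $u$ being unique by hypothesis,'' but uniqueness of solutions does not yield comparison between the upper semicontinuous subsolution $\overline u$ and the lower semicontinuous supersolution $\underline u$ that half-relaxed limits produce. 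The paper handles this with the strong uniqueness property, Theorem \ref{thm:SUP}, stated for \emph{generalized} viscosity sub/supersolutions in the Barles--Souganidis sense --- needed because $\overline u,\underline u$ satisfy the boundary condition only in the relaxed $\max/\min$ form of Definition \ref{def:genvisc} --- together with Proposition \ref{prop:equivnotions} to return to the usual notion. Your proposed shortcut (discrete boundary barriers giving $\overline u\le g\le\underline u$ pointwise on $\partial\Omega$) is plausible for a $C^2$ domain, but it would still require an interior comparison theorem for semicontinuous sub/supersolutions of $-\Delta_p u=f$ with merely continuous, sign-changing $f$; that is precisely the nontrivial input imported from Theorem 9.5 of \cite{dTLi20}, and it cannot be taken for granted.
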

\begin{remark}\label{rem:conj}
 We conjecture that the relation $h=o(r^{3/2})$ is sufficient also in the range $p\in(1,3)$. See Section \ref{sec:fullynon} for numerical evidence supporting this.
\end{remark}

We note that if we restrict \eqref{eq:Num1}-\eqref{eq:Num2} to the uniform grid $\Grid$ we obtain a fully discrete problem suited for numerical computations. More precisely, define the discrete sets
\[
B_r^h:=B_r\cap \Grid,  \quad \Omega^h:= \Omega\cap \Grid, \quad \partial\Omega^h:= \partial\Omega\cap \Grid \quad \textup{and} \quad \Omega^h_r:= \Omega_r\cap \Grid.
\]
Observe that  $\MVPrh$ given in \eqref{eq:discPlap} can be interpreted as an operator $\MVPrh:\ell^\infty(\Grid)\to \ell^\infty(\Grid)$ since given any $x_\beta, y_\alpha\in \Grid $ we have $x_\beta+y_\alpha=(\beta+\alpha)h = x_{\beta+\alpha}\in \Grid$ and then
\[
\MVPrh\phi_\beta:= \frac{h^d}{D_{d,p}\, \omega_d\, r^{p+d}} \sum_{y_\alpha\in B_r} J_p(\phi_{\beta+\alpha}-\phi_\beta) \quad \textup{for}\quad x_\beta \in \Omega^h 
\]
with $\phi: \Grid\to \R$ and $\phi_{\gamma}:= \phi(\gamma h)$, whenever  $\gamma h \in \Grid$. Finally note that if $x_\beta\in \Omega^h$ and $y_\alpha \in B_r^h$ we have that $x_\beta+y_\alpha= x_{\beta+\alpha} \in \Omega_r^h$, so that \eqref{eq:Num1}-\eqref{eq:Num2} can be interpreted as 
\begin{empheq}[left=\empheqlbrace]{align}
\label{eq:NumDisc1}
-\MVPrh U_\beta =f_\beta, &\quad  x_\beta\in \Omega^h\\
\label{eq:NumDisc2}\ U_\beta=G_\beta, &\quad  x_\beta\in \partial \Omega_r^h,
\end{empheq}
with $U:\Omega_r^h\to \R$, $f_\beta:=f(x_\beta)$ and $G_\beta:=G(x_\beta)$. In this way we have the following trivial consequence of Theorem  \ref{thm:ConvNumSch}. 
\begin{corollary}\label{coro:ConvNumSch}
\label{cor:dpp}
 Assume the hypotheses of Theorem \ref{thm:ConvNumSch}.
\begin{enumerate}[\rm (a)]
\item Then there exists a unique pointwise solution $U\in \ell^\infty(\Omega_r^h)$ of \eqref{eq:NumDisc1}-\eqref{eq:NumDisc2} when $r$ is small enough.
\item If $u$ is the unique viscosity solution of \eqref{eq:BVP}-\eqref{eq:BC}, then
\[
 \max_{x_\beta\in \Omega_h}\left|U_\beta-u(x_\beta)\right| \to 0 \quad \textup{as} \quad r\to0.
\]

\end{enumerate} 
\end{corollary}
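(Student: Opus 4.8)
The plan is to read both claims directly off Theorem~\ref{thm:ConvNumSch} by exploiting that the fully discrete system \eqref{eq:NumDisc1}-\eqref{eq:NumDisc2} is exactly the restriction of the continuous scheme \eqref{eq:Num1}-\eqref{eq:Num2} to the lattice $\Grid=h\Z^d$. The structural fact that makes this work is the grid-closure property already noted before the statement: for $x_\beta\in\Grid$ and $y_\alpha\in B_r^h\subset\Grid$ we have $x_\beta+y_\alpha=x_{\beta+\alpha}\in\Grid$, so $\MVPrh\phi(x_\beta)$ depends only on the values of $\phi$ at lattice points. Consequently, evaluating \eqref{eq:Num1}-\eqref{eq:Num2} at the points of $\Grid$ produces a closed system in the grid unknowns that is verbatim \eqref{eq:NumDisc1}-\eqref{eq:NumDisc2}.

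For existence in (a), I would take the unique $u_h\in L^\infty(\Omega_r)$ furnished by Theorem~\ref{thm:ConvNumSch}(a) (for $r$ small enough) and set $U_\beta:=u_h(x_\beta)$ on $\Omega_r^h$. Because \eqref{eq:Num1} holds at every $x\in\Omega$, in particular at each $x_\beta\in\Omega^h\subset\Omega$, and because only grid values of $u_h$ enter there, $U$ satisfies \eqref{eq:NumDisc1}; the boundary identity \eqref{eq:NumDisc2} follows in the same way from \eqref{eq:Num2}. Hence $U\in\ell^\infty(\Omega_r^h)$ solves the discrete system.

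For uniqueness, the key observation is that for each fixed base point the operator $\MVPrh$ only couples that point to its own translate of the lattice, so \eqref{eq:Num1}-\eqref{eq:Num2} decouples into independent subsystems indexed by the cosets of $\Grid$, the coset containing the origin being precisely \eqref{eq:NumDisc1}-\eqref{eq:NumDisc2}. Thus the comparison/monotonicity argument that yields uniqueness in Theorem~\ref{thm:ConvNumSch}(a) is already a statement about $\MVPrh$ on a lattice and applies verbatim to the restricted index set $\Omega_r^h$; equivalently, two discrete solutions could be glued to the continuous solution on all other cosets to produce two solutions of \eqref{eq:Num1}-\eqref{eq:Num2}, which must coincide by Theorem~\ref{thm:ConvNumSch}(a). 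Either way the discrete solution is unique.

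Part (b) is then immediate. With $U_\beta=u_h(x_\beta)$ and $\Omega^h\subset\overline{\Omega}$ we have
\[
\max_{x_\beta\in\Omega^h}\bigl|U_\beta-u(x_\beta)\bigr|
=\max_{x_\beta\in\Omega^h}\bigl|u_h(x_\beta)-u(x_\beta)\bigr|
\le \sup_{x\in\overline{\Omega}}\bigl|u_h(x)-u(x)\bigr|,
\]
and the right-hand side tends to $0$ as $r\to0^+$ by Theorem~\ref{thm:ConvNumSch}(b). The only mildly delicate point in the whole argument is the uniqueness in (a)---one must check that sampling on a single coset of $\Grid$ discards no coupling---but this is exactly what grid-closure guarantees, so the statement is, as advertised, a direct corollary rather than a genuinely new argument.
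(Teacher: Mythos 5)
Your proposal is correct and matches the paper's intent exactly: the paper offers no separate proof, presenting the corollary as a trivial consequence of Theorem \ref{thm:ConvNumSch} via precisely the grid-closure observation ($x_\beta+y_\alpha=x_{\beta+\alpha}\in\Grid$) that you use, so that restricting $u_h$ to $\Grid$ yields the discrete solution and part (b) follows by bounding the max over $\Omega^h\subset\overline{\Omega}$ by the sup in Theorem \ref{thm:ConvNumSch}(b). Your coset-decoupling/gluing argument for uniqueness is a correct and careful way of making explicit what the paper leaves implicit in calling the corollary trivial.
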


\subsection{Related results} \label{sec:relres}
 
 For an overview of classical and modern results for the $p$-Laplacian, we refer the reader to the book \cite{Linq19}. For an overview of numerical methods for degenerate elliptic PDEs we refer the reader to Section 1.1 in \cite{ObermanpLap}.
 
We want to stress that the operator of interest in this paper is the \emph{variational} $p$-Laplacian, i.e.,
\[
\lap_p u = \Div(|\nabla u|^{p-2}\nabla u). 
\]
%To our best knowledge, finite difference discretizations of this operator have never been presented in the literature, and this is, in our opinion, the main novelty of this work. \normalcolor

On the other hand, finite difference methods for equations involving the $p$-Laplacian have been successfully developed using the \emph{normalized} (or game theoretical) version of the $p$-Laplacian  $\lap_p^\textup{N}$. The ideas are based on the identity
$$
\lap_p u =|\nabla u|^{p-2}\lap u +(p-2)|\nabla u|^{p-4}\lap_\infty u.
$$
This allows to define
$$
\lap_p^\textup{N} u :=  |\nabla u|^{2-p}\lap_p u = \lap u +(p-2)\lap_\infty^\textup{N} u,
$$
where $\lap_\infty^\textup{N}$ is the so-called normalized infinity Laplacian, which is given by the second order directional derivative in the direction of the gradient.  One limitation of such methods is the fact that they are not adapted to treat nonhomogeneous problems of the form $-\lap_p u=f$. Instead they allow for treating inhomogeneities of the form $-\lap_p u=|\nabla u|^{p-2}f$ (both problems are equivalent only if $f\equiv0$). 

Let us first comment on the literature related to finite difference methods for $\lap_p^\textup{N}$. In \cite{ObermanpLap}, the author presents a monotone finite difference scheme for the normalized infinity Laplacian and the game theoretical (or normalized) $p$-Laplacian for $p\geq2$. In addition, a scheme for  \eqref{eq:BVP}-\eqref{eq:BC} with $f\equiv0$ is presented, 
 together with a semi-implicit solver. In \cite{dTMP18}, a strategy to prove the convergence of dynamic programming principles (including monotone finite difference schemes) for the normalized $p$-Laplacian is presented,  as well as the strong uniqueness property for the $p$-Laplacian, which is crucial for the application of the convergence criteria of Barles and Souganidis in \cite{BS91}. We also seize the opportunity mention Section 6 in \cite{CoLeMa17}, where a finite difference method (based on the mean value properties of the normalized $p$-Laplacian) is proposed for a double-obstacle problem involving the $p$-Laplacian. We note that in the case $1<p<2$ neither of the above mentioned schemes are monotone, and as such, the numerical scheme in this paper is the first one treating this range, even in the homogeneous case $f\equiv0$.

There are many other monotone approximations of $\lap_p^\textup{N}$ available in the literature. Strictly speaking, they are not numerical approximations, but the proof of convergence follows similar strategies based on monotonicity and consistency. See \cite{dTMP18} for a discussion on this topic. Such approximations were first presented in \cite{MRP} (see also  \cite{LewMan14,MPR12a, MPR12b} for a probabilistic game theoretical approach). The basic idea of these approximations is to combine the classical mean value property (MVP) for the Laplacian with a MVP for the normalized infinity Laplacian motivated by Tug-of-War games \cite{Pe09}. The literature on this topic has become extensive in the last decade. In \cite{AL16, LM16} the equivalence between being $p$-harmonic and satisfying a MVP is treated. See \cite{KMP12,Le20} for a MVP in the full range $1<p<\infty$ and \cite{LeMa17} for the application of such approximations in the context of obstacle problems.

Regarding monotone approximations of the variational $p$-Laplacian, the literature is very recent and not so extensive. The MVP given by \eqref{eq:MVF} was derived in \cite{BS18, dTLi20}. In \cite{dTLi20} it is shown to be a monotone approximation of $\Delta_p$. The authors are also able to prove convergence of the corresponding approximating problems to a viscosity solution. 

It is noteworthy that the discretization presented in this paper is reminiscent of the definition of the variational $p$-Laplacian on graphs, see \cite{Amb03} and also \cite{Yam79}. In this direction, Corollary \ref{coro:ConvNumSch} can be interpreted as the convergence of the solution to a PDE defined on a graph associated to the grid. We refer to the recent paper \cite{Wang21} for a study of the eigenvalues of this operator and to \cite{Abd15} for its applications to image processing. Note that also the normalized $p$-Laplacian has been defined on graphs, see \cite{MOS15}.

Finally, we seize the opportunity to mention that since the $p$-Laplacian is of divergence form, it is well suited for finite element based methods. We mention a few papers in this direction:  \cite{BL93}, \cite{FedPP-L}, \cite{GM75}, \cite{Kwa05}, \cite{WN01}, \cite{WN02} \cite{LB93} and \cite{Loi20}. We want to stress that finite element methods does not produce monotone approximations, and thus, 
are not well suited for treating viscosity solutions.
\subsection{Organization of the paper}

In Section \ref{sec:not}, we introduce some notation and prerequisites needed in the rest of the paper.  Section \ref{sec:cons} is devoted to the proof of consistency of the discretization previously introduced. In Section \ref{sec:scheme}, we study the numerical scheme for the boundary value problems. 
This is followed by  a discussion around solving the nonlinear systems of equations derived from our scheme, in Section \ref{sec:NonlinearSis}. Finally, in Section \ref{sec:exp}, we perform some numerical experiments to support our theoretical results. We also have an appendix containing technical results.

\section{Notations and prerequisites}\label{sec:not}
We adopt the following definition of viscosity solutions, which is the classical definition adjusted to the nonhomogeneous equation (see e.g. \cite{equivalence2}). 
 \begin{definition} [Solutions of the equation] \label{pvisc} Suppose that $f\in C(\Omega)$. We say that a lower (resp. upper) semicontinuous function $u$ in $\Omega$ is a \emph{viscosity supersolution} (resp. \emph{subsolution})  of the
  equation $$-\plap u=f $$  in $\Omega$ if the
  following holds: whenever $x_0 \in \Omega$ and $\varphi \in
  C^2(B_R(x_0))$ for some $R>0$ are such that $|\nabla \varphi(x)|\neq 0$ for $x\in B_R(x_0)\setminus\{x_0\}$, 
$$ \varphi(x_0) = u(x_0) \quad \text{and} \quad \varphi(x) \leq
u(x) \quad  \text{(resp. $\varphi(x)\geq u(x)$)} \quad \text{for all} \quad
x \in  B_R(x_0)\cap \Omega,$$
then we have
\begin{equation}\label{eq:testfuncdef}\lim_{\rho\to 0}\sup_{B_\rho(x_0)\setminus\{x_0\}}\left(-\plap \varphi (x)\right)\geq f(x_0) \quad   \text{(resp. $\lim_{\rho\to 0}\inf_{B_\rho(x_0)\setminus\{x_0\}}\left(-\plap \varphi (x)\right)\leq f(x_0)$)}.
\end{equation}
 A \emph{viscosity solution} is a function $u\in C(\Omega)$ being
both a viscosity supersolution and a viscosity subsolution.
\end{definition}
 
\begin{remark}\label{rem:equivviscform}
We consider condition \eqref{eq:testfuncdef} to avoid problems with the definition of $-\Delta_p \varphi(x_0)$ when $|\nabla \varphi(x_0)|=0$ and $p\in(1,2)$. However, when either $p\geq2$ or $|\nabla \varphi(x_0)|\not=0$, \eqref{eq:testfuncdef} can be replaced by the standard one, i.e.,
\begin{equation}\label{eq:testfuncdef2}
-\plap \varphi (x_0)\geq f(x_0) \quad   \text{(resp. $-\plap \varphi (x_0)\leq f(x_0)$)}.
\end{equation}
\end{remark}
A viscosity solution of the boundary value problem \eqref{eq:BVP}-\eqref{eq:BC} attaining the boundary condition in a pointwise sense is naturally defined as follows.
\begin{definition}[Solutions of the boundary value problem] 
Suppose that $f\in C(\overline{\Omega})$ and $g\in C(\partial \Omega)$. We say that a lower (resp. upper) semicontinuous function $u$  in $\overline{\Omega}$  is a \emph{viscosity supersolution} (resp. \emph{subsolution}) of \eqref{eq:BVP}-\eqref{eq:BC} if
\begin{enumerate}[(a)]
\item $u$ is a viscosity supersolution (resp. subsolution) of $-\Delta_p u=f$ in $\Omega$ (as in Definition \ref{pvisc});
\item $u(x)\geq g(x)$ (resp. $u(x)\leq g(x)$) for $x\in \partial \Omega.$
\end{enumerate}
A \emph{viscosity solution} of \eqref{eq:BVP}-\eqref{eq:BC} is a function $u\in C(\overline{\Omega})$ being both a viscosity supersolution and a viscosity subsolution.
\end{definition}

\begin{remark}
To prove the convergence result (Theorem \ref{thm:ConvNumSch}\eqref{thm:ConvNumSch-item2}) we will make use of a generalized notion of viscosity solutions of a boundary value problem. We will introduce this notion just before using it. See Section \ref{sec:conv}.
\end{remark}

\section{Consistency of the discretization: Proof of Theorem \ref{thm:MVF}}\label{sec:cons}
In this section we prove the consistency of the discretization $\MVPrh$ for $C^2$-functions as presented in Theorem \ref{thm:MVF}.
\begin{proof}[Proof of Theorem \ref{thm:MVF}]  Throughout this proof, $C$ will denote a constant that may depend on $p$, the dimension $d$, but not on $r$ or $h$.

The mean value property introduced in \cite{dTLi20} involves the quantity
\[
\Mmvp[\phi](x)=\frac{1}{D_{d,p} r^p} \fint_{ B_r} J_p(\phi(x+y)-\phi(x)) \dd y.
\]
By the triangle inequality and Theorem 2.1 in \cite{dTLi20}
\[
\begin{split}
\left|\MVPrh\phi(x)-\lap_p \phi(x)\right|&\leq \left|\MVPrh\phi(x)-\Mmvp[\phi](x)\right|+ \left|\Mmvp[\phi](x)-\lap_p \phi(x)\right|\\
&= \left|\MVPrh\phi(x)-\Mmvp[\phi](x)\right|+ o_r(1) \qquad \textup{as} \qquad r\to0^+.
\end{split}
\]
Therefore, it is sufficient to show that
 $$|\MVPrh\phi(x)-\Mmvp[\phi](x)|=o_r(1)  \qquad \textup{as} \qquad r\to0^+.$$
\emph{Step 1: Approximation of $B_r$ by $h$-boxes.}
Define the following family of $h$-boxes centred at $y_\alpha\in \Grid$,
\[
R^h_\alpha:= y_\alpha + \frac{h}{2}[-1,1)^d,
\]
and the union of boxes that approximates $B_r$
\[
 \tilde{B}_r:= \bigcup_{y_\alpha \in B_r^h} R^h_\alpha.
\]
See Figure \ref{fig:bc1}. 
\vspace{-1cm}
\begin{figure}[h]
\includegraphics[scale=0.7]{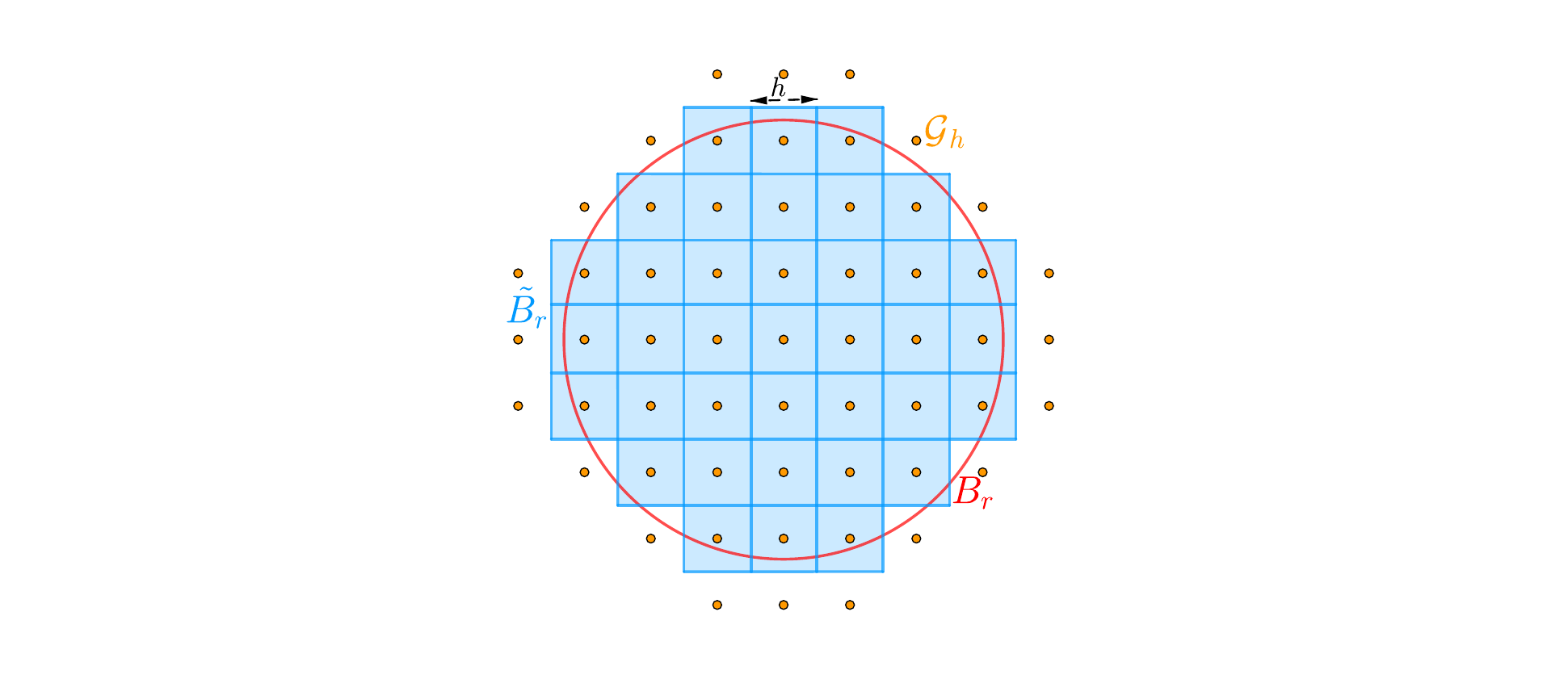}
\vspace{-0.8cm}
\caption{The family of boxes and their union $\tilde B_r$ that covers $B_r$.}
\label{fig:bc1}
\end{figure}

Consider 
\[
A_r:=\frac{1}{2}\int_{ B_r} \left(J_p(\phi(x+y)-\phi(x))+J_p(\phi(x-y)-\phi(x))\right) \dd y,
\] 
and
\[
\tilde{A}_r:=\frac{1}{2}\int_{ \tilde{B}_r} \left(J_p(\phi(x+y)-\phi(x))+J_p(\phi(x-y)-\phi(x))\right)  \dd y.
\]
In this step we will prove that 
\begin{equation}\label{eq:Ar}
|A_r-\tilde{A}_r|=o(r^{d+p}).
\end{equation}
Notice first that
\[
\begin{split}
\left|A_r-\tilde{A}_r\right|= \frac{1}{2}&\left| \int_{ B_r\setminus \tilde{B_r}} \left(J_p(\phi(x+y)-\phi(x))+J_p(\phi(x-y)-\phi(x))\right) \dd y  \right.\\
&\qquad  \left.- \int_{ \tilde{B}_r\setminus B_r} \left(J_p(\phi(x+y)-\phi(x))+J_p(\phi(x-y)-\phi(x))\right) \dd y  \right|\\
\leq \quad & \frac12\int_{(B_r\cup\tilde{B}_r)\setminus (B_r\cap\tilde{B}_r)} \left|J_p(\phi(x+y)-\phi(x))+J_p(\phi(x-y)-\phi(x))\right| \dd y.
\end{split} 
\]
It is easy to verify that $B_r\cup\tilde{B}_r\subset B_{r+\sqrt{d}h}$ and $B_{r-\sqrt{d}h}\subset B_r\cap\tilde{B}_r$ so that 
\[
(B_r\cup\tilde{B}_r)\setminus (B_r\cap\tilde{B}_r)\subset B_{r+\sqrt{d}h}\setminus B_{r-\sqrt{d}h}.
\]
Observe that regardless of the value of $p$, we always have $h=o(r)$. Therefore, 
\[
\begin{split}
|(B_r\cup\tilde{B}_r)\setminus (B_r\cap\tilde{B}_r)|&\leq |B_{r+\sqrt{d}h}\setminus B_{r-\sqrt{d}h}|= \omega_d \left((r+\sqrt{d}h)^d- (r-\sqrt{d}h)^d\right)\\
&\leq  \omega_d d(r+\sqrt{d}h)^{d-1}2\sqrt{d}h\leq  C r^{d-1}h\\
&= o(r^{d}).
\end{split}
\]
On the other hand, by Taylor expansion
\[
|\phi(x+y)-\phi(x)+ \phi(x-y)-\phi(x)|=\mathcal{O}(|y|^2).
\]
 In the case $p\geq2$, Lemma \ref{lem:pineq1} implies
\[
\begin{split}
|J_p&(\phi(x+y)-\phi(x))+J_p(\phi(x-y)-\phi(x))|= |J_p(\phi(x+y)-\phi(x))-J_p(-\phi(x-y)+\phi(x))|\\
&\leq C \max( |\phi(x-y)-\phi(x)|,|\phi(x+y)-\phi(x)|)^{p-2}|\phi(x+y)-\phi(x)+ \phi(x-y)-\phi(x)|=O(|y|^p).
\end{split}
\]
We can conclude 
\[
\begin{split}
|A_r-\tilde{A}_{r}|
&\leq \frac{\tilde{C}}{2} \int_{(B_r\cup\tilde{B}_r)\setminus (B_r\cap\tilde{B}_r)} |y|^{p} \dd y\leq \frac{1}{2}(r+\sqrt{d}h)^{p} |(B_r\cup\tilde{B}_r)\setminus (B_r\cap\tilde{B}_r)|= o(r^{p+d}).
\end{split} 
\]
In the case $p< 2$, we argue slightly different. On page 8 in \cite{dTLi20} it is proved that 
$$
\fint_{\partial B_r}|J_p(\phi(x+y)-\phi(x))-J_p(y\cdot \nabla \phi(x)+\frac12y^T D^2\phi(x) y)| dy =o(r^p).
$$
In a similar fashion, one can prove
\[
\begin{split}
\int_{(B_r\cup\tilde{B}_r)\setminus (B_r\cap\tilde{B}_r)}\Big|&J_p(\phi(x+y)-\phi(x))+J_p(\phi(x-y)-\phi(x))\\&-J_p(y\cdot \nabla \phi(x)+\frac12y^T D^2\phi(x) y)-J_p(-y\cdot \nabla \phi(x)+\frac12y^T D^2\phi(x) y)\Big| dy =o(r^{p+d}).
\end{split}
\]
To show \eqref{eq:Ar} it is therefore sufficient to show that 
\begin{equation}\label{eq:eq1}
\int_{(B_r\cup\tilde{B}_r)\setminus (B_r\cap\tilde{B}_r)}\left|J_p(y\cdot \nabla \phi(x)+\frac12y^T D^2\phi(x) y)+J_p(-y\cdot \nabla \phi(x)+\frac12y^T D^2\phi(x) y)\right| dy=o(r^{p+d}).
\end{equation}
Without loss of generality assume that $\nabla \phi(x)=ce_1$ with $c\not=0$. Then
\begin{equation*}
\begin{split}
J_p(y\cdot \nabla\phi (x) +\frac{1}{2}y^T D^2 \phi(x) y)&=J_p(c y \cdot \vec{e}_1 +\frac12 y^T D^2 \phi(x) y)=(c|y|)^{p-1} J_p(\hat{y} \cdot \vec{e}_1 +\frac12 c^{-1}|y|\hat{y}^T D^2 \phi(x) \hat{y} )
\end{split}
\end{equation*}
where $\hat{y}=y/|y|$. By Lemma \ref{lem:pineq3} with $a=\hat{y} \cdot \vec{e}_1$ and $b=\frac12  c^{-1}|y|\hat{y}^T D^2 \phi(x) \hat{y}$ we get
 \[
\begin{split}
(c|y|)^{p-1} \Big|J_p(\hat{y} \cdot \vec{e}_1 +&\frac12 c^{-1}|y|\hat{y}^T D^2 \phi(x) \hat{y} )- J_p(\hat{y} \cdot \vec{e}_1)\Big|\\
&\leq C(c|y|)^{p-1}\left(|\hat{y} \cdot \vec{e}_1|+\frac12  c^{-1}|y||\hat{y}^T D^2 \phi(x) \hat{y}|\right)^{p-2}\frac12 c^{-1}|y||\hat{y}^T D^2 \phi(x) \hat{y}|\\
&\leq C|y|^p|\hat{y} \cdot \vec{e}_1|^{p-2}.
\end{split}
\]
Hence, 
$$
\left|J_p(y\cdot \nabla \phi(x)+\frac12y^T D^2\phi(x) y)+J_p(-y\cdot \nabla \phi(x)+\frac12y^T D^2\phi(x) y)\right|\leq C|y|^p|\hat{y} \cdot \vec{e}_1|^{p-2}.
$$
From (6.2) in \cite{dTLi20} it then follows that 
\begin{equation*}
\begin{split}
\int_{\partial B_r}\Big|J_p(y\cdot \nabla \phi(x)+\frac12y^T D^2\phi(x) y)+J_p(-y\cdot \nabla \phi(x)+&\frac12y^T D^2\phi(x) y)\Big| d\sigma(y) \\
&\leq C r^p \int_{\partial B_r}|\hat{y} \cdot \vec{e}_1|^{p-2} \dd\sigma(y)\leq \tilde{C}r^{p+d-1}.
\end{split}
\end{equation*}
After integration (to  pass from spheres to balls) we obtain 
\[
\begin{split}
\int_{(B_r\cup\tilde{B}_r)\setminus (B_r\cap\tilde{B}_r)}&\left|J_p(y\cdot \nabla \phi(x)+\frac12y^T D^2\phi(x) y)+J_p(-y\cdot \nabla \phi(x)+\frac12y^T D^2\phi(x) y)\right| dy \\
&\leq \int_{B_{r+\sqrt{d}h}\setminus B_{r-\sqrt{d}h}}\left|J_p(y\cdot \nabla \phi(x)+\frac12y^T D^2\phi(x) y)+J_p(-y\cdot \nabla \phi(x)+\frac12y^T D^2\phi(x) y)\right| dy \\
& \leq Chr^{p+d-1}=o(r^{d+p}).
\end{split}
\]
This is \eqref{eq:eq1}.

\emph{Step 2: Discretization of $\tilde{A}_r$. } Consider 
\[
\tilde{A}_r^h:=h^d\sum_{y_\alpha\in B_r}J_p(\phi(x+y_\alpha)-\phi(x)).
\]
We will show that
\begin{equation}\label{eq:Arh}
|\tilde{A}_r-\tilde{A}_r^h|=o(r^{d+p}).
\end{equation}

Observe that 
\[
\begin{split}
\tilde{A}_r&=\frac{1}{2}\sum_{y_\alpha\in B_r}\int_{ R_\alpha^h} \left(J_p(\phi(x+y)-\phi(x))+J_p(\phi(x-y)-\phi(x))\right)  \dd y\\
&=\frac{1}{2}\sum_{y_\alpha\in B_r}\int_{ R_0^h} \left(J_p(\phi(x+y_\alpha+y)-\phi(x))+J_p(\phi(x+y_\alpha-y)-\phi(x))\right)  \dd y.
\end{split}
\]
Since $|R_0^h|=h^d$ we have
\[
\begin{split}
|\tilde{A}_{r}&-A_r^h|\\
&= \frac{1}{2}\left| \sum_{y_\alpha\in B_r} \int_{R_0^h} \left(J_p(\phi(x+y_\alpha+y)-\phi(x))+J_p(\phi(x+y_\alpha-y)-\phi(x)) - 2J_p(\phi(x+y_\alpha)-\phi(x)) \right)\dd y \right|.
\end{split} 
\]
If $p\geq2$ we use Taylor expansion of order two and obtain
$$
\phi(x+y_\alpha\pm y)-\phi(x)=\phi(x+y_\alpha)-\phi(x)\pm \nabla \phi(x+y_\alpha)\cdot y + \mathcal{O}(y^2).
$$
Let $\rho=\phi(x+y_\alpha)-\phi(x)$ and $\eta=\nabla\phi(x+y_\alpha)$. Then this can be expressed as
$$
\phi(x+y_\alpha+y)-\phi(x)=\rho+\eta\cdot y+\mathcal{O}(|y|^2).
$$
Therefore, by Lemma \ref{lem:pineq1}  
\[
\begin{split}
\Big|J_p(\phi(x+y_\alpha+y)-\phi(x))-J_p(\rho+\eta\cdot y)\Big|&\leq C\max (|\phi(x+y_\alpha+y)-\phi(x)|,|\rho+\eta\cdot y|)^{p-2}|y|^2\\
&\leq Cr^{p-2}o(r^2)\\
&= o(r^{p}),
\end{split}
\]
where we have used that $y=\mathcal{O}(h)=o(r)$ and that $\rho=\mathcal{O}(y_\alpha)=\mathcal{O}(r)$.
It follows that it will be enough to obtain an estimate of the form 
\begin{equation}
\label{eq:long}
|J_p(\rho+\eta\cdot y)+J_p(\rho-\eta \cdot y)-2J_p(\rho)| = o(r^p).
\end{equation}
For $p=2$, this estimate is trivial. When $p >3$ we use the second order Taylor expansion of $J_p$ to obtain 
\begin{equation}
\label{eq:pgeq3}
\begin{split}
|J_p(\rho+\eta\cdot y)- J_p(\rho)-(p-1)|\rho|^{p-2}\eta\cdot y|&\leq C\max(|\rho|,|\rho+\eta\cdot y|)^{p-3}|\eta\cdot y|^{2}\\
&\leq Cr^{p-3}o(r^{3}) =o(r^p), 
\end{split}
\end{equation}
since $\rho=\mathcal{O}(y_\alpha)=\mathcal{O}(r)$ and $y=\mathcal{O}(h)=o(r^\frac32)$ when $p>3$.

When $p\in (2,3]$ we use the fact that the derivative of the function $t\mapsto J_p(t)$ is $({p-2})$-H\"older continuous and obtain 
\begin{equation}
\label{eq:p2to3}
\begin{split}
|J_p(\rho+\eta\cdot y)- J_p(\rho)-(p-1)|\rho|^{p-2}\eta\cdot y|&\leq C|\eta\cdot y|^{p-1}\\
& =o(r^p), 
\end{split}
\end{equation}
where we used that $y=\mathcal{O}(h)=o(r^{p/(p-1)})$ when $p\in (2,3]$. The estimate \eqref{eq:long} follows immediately from \eqref{eq:pgeq3} and \eqref{eq:p2to3}, respectively.

If $p<2$ we use the fact that $J_p$ is $(p-1)$-H\" older continuous. Thus,
\begin{equation}
\begin{split}\label{eq:long2}
\big|J_p(\phi(x+y_\alpha+y)-\phi(x))-J_p(\phi(x+y_\alpha)-\phi(x) )\big|&\leq C|\phi(x+y_\alpha+y)-\phi(x+y_\alpha)|^{p-1}
\\&\leq C|y|^{p-1}=o(r^p),
\end{split}
\end{equation}
where we used the assumption $y=\mathcal{O}(h)=o(r^{p/(p-1)})$ when $p<2$. Using \eqref{eq:long} and \eqref{eq:long2} we get
\[
|\tilde{A}_{r}-A_r^h|=o(r^p)\sum_{y_\alpha\in B_r} h^d=o(r^p)|\tilde{B}_r|\leq o(r^p) |B_{r+\sqrt{d}h}|=o(r^{p+d}).
\]
\emph{Step 3: Conclusion.} Combining Step 1 and Step 2, we obtain
\[
\begin{split}
|\MVPrh\phi(x)-\Mmvp[\phi](x)|&=\frac{1}{D_{d,p}r^p |B_r|}|A_r^h-A_r|\\
&\leq \frac{C}{ r^{p+d}} \left(|A_r^h-\tilde{A}_r|+ |\tilde{A}_r-A_r|\right)= \frac{1}{ r^{p+d}} o(r^{p+d})= o_r(1).\qedhere
\end{split}
\]
\end{proof}

\section{Properties of the numerical scheme}\label{sec:scheme}
In this section we will state and prove some properties of the numerical scheme \eqref{eq:Num1}-\eqref{eq:Num2}.

\subsection{Existence and uniqueness}\label{sec:existanduniquenumsol} We will obtain the existence and uniqueness result given in Theorem \ref{thm:ConvNumSch}\eqref{thm:ConvNumSch-item1}.

First note that we can write 
\[
\begin{split}
\MVPrh\phi(x)&= \frac{h^d}{D_{d,p}\, \omega_d\, r^{p+d}} \sum_{y_\alpha\in B_r} J_p(\phi(x+y_\alpha)-\phi(x))\\
&=\frac{1}{D_{d,p}r^p} \fint_{B_r}J_p(\phi(x+y)-\phi(x))\dd \mu(y)
\end{split}
\]
with $\mu$ being the discrete measure given by
\[
\dd \mu(y):=h^d\sum_{y_\alpha\in B_r} \dd \delta_{y_\alpha}(y),
\]
where $\delta_{z}$ denotes the dirac delta measure at $z\in \R^d$. With this simple observation, all the results of Section 9.1 in \cite{dTLi20} follow here word by word (replacing $\mathcal{M}^p_{r}$ by $\MVPrh$ and $\dd y$ by $\dd \mu(y)$). We state them for completeness. Our running assumptions in this section will be $f\in C(\overline{\Omega})$ and $G\in C(\partial \Omega_r)$ (a continuous extension of $g\in C(\partial \Omega)$).

The comparison result below implies in particular the uniqueness of solutions of  \eqref{eq:Num1}-\eqref{eq:Num2}. 
\begin{proposition}[Comparison]\label{prop:comparison}
Let $p\in (1,\infty)$, $h,r>0$, and $v,w\in L^\infty(\Omega_r)$ be such that
\[
\begin{cases}
-\MVPrh w (x)\geq f(x),& x\in \Omega,\\
\qquad w(x)\geq G(x),& x\in \partial \Omega_r,
\end{cases} \qquad 
\textup{and}
\qquad 
 \begin{cases}
-\MVPrh v (x)\leq f(x),& x\in \Omega,\\
\qquad v(x)\leq G(x),& x\in \partial \Omega_r.
\end{cases}
\]
Then $u\leq w$ in $\Omega_r$.
\end{proposition}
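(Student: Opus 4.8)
The plan is to run a discrete strong maximum principle argument on the difference $v-w$ (proving $v\leq w$, which is clearly the intended conclusion). First I would observe that the scheme decouples over the cosets of the grid: since $\MVPrh\phi(x)$ only samples $\phi$ at the points $x+y_\alpha$ with $y_\alpha\in B_r^h=B_r\cap\Grid\subset h\Z^d$, the base point $x$ and all sampled points lie in the same coset $x+h\Z^d$. Hence it suffices to prove $v\leq w$ on each coset $C=\bar x+h\Z^d$ separately. This is convenient because $\Omega$ is bounded, so $\Omega_r\cap C$ is a \emph{finite} set and the supremum $M:=\max_{\Omega_r\cap C}(v-w)$ is actually attained. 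Suppose, for contradiction, that $M>0$. Since $v\leq G\leq w$ on $\partial\Omega_r$, we have $v-w\leq 0$ there, so the maximum must be attained at some interior point $x_0\in\Omega\cap C$.

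Next I would extract the rigidity at $x_0$. Because $(v-w)(x_0+y_\alpha)\leq M=(v-w)(x_0)$ for every $y_\alpha\in B_r^h$, we get $v(x_0+y_\alpha)-v(x_0)\leq w(x_0+y_\alpha)-w(x_0)$, and the monotonicity of $t\mapsto J_p(t)$ gives $J_p(v(x_0+y_\alpha)-v(x_0))\leq J_p(w(x_0+y_\alpha)-w(x_0))$ termwise; summing with the positive weights defining $\MVPrh$ yields $\MVPrh v(x_0)\leq\MVPrh w(x_0)$. On the other hand the subsolution and supersolution inequalities give $\MVPrh v(x_0)\geq -f(x_0)\geq\MVPrh w(x_0)$. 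Combining, all these inequalities are equalities, so the summed termwise inequalities must be equalities as well. Since $J_p$ is \emph{strictly} increasing, I conclude $v(x_0+y_\alpha)-v(x_0)=w(x_0+y_\alpha)-w(x_0)$, i.e.\ $(v-w)(x_0+y_\alpha)=M$ for every $y_\alpha\in B_r^h$. Thus the maximum set $S:=\{x\in\Omega_r\cap C:(v-w)(x)=M\}$ propagates: whenever $x_0\in S\cap\Omega$, the whole discrete neighborhood $x_0+B_r^h$ lies in $S$.

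Finally I would propagate the maximum out to the boundary band to reach a contradiction. Note that since $h=o(r)$ in all regimes of \eqref{eq:ass}, for small $r$ the lattice vector $he_1$ belongs to $B_r^h$. Starting from $x_0\in S\cap\Omega$, I march along $x_0+jhe_1$; since $\Omega$ is bounded there is a smallest $k\geq1$ with $x_0+khe_1\notin\Omega$, and by the propagation step applied inductively along $j=0,\dots,k-1$ (each base point of which stays in $S\cap\Omega$ since $j<k$) the point $x_0+khe_1$ still lies in $S$. But $\textup{dist}(x_0+khe_1,\Omega)\leq h\leq r$, so $x_0+khe_1\in\partial\Omega_r$, where $(v-w)\leq 0<M$, contradicting $x_0+khe_1\in S$. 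Hence $M\leq 0$ on every coset, that is, $v\leq w$ on $\Omega_r$.

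The main obstacle is making the maximum-principle propagation rigorous despite $v,w$ being only $L^\infty$: the decoupling into cosets together with the boundedness of $\Omega$ is exactly what guarantees a genuine (attained) maximum on a finite set, so no delicate approximate-maximum or doubling-of-variables machinery is required. The other point demanding care is that the jump length of the scheme ($\leq r$) does not exceed the width $r$ of the boundary band $\partial\Omega_r$, which is what ensures that marching out of $\Omega$ by lattice steps of size $h\leq r$ lands \emph{inside} $\partial\Omega_r$ rather than overshooting it; this is precisely where the construction of $\partial\Omega_r$ with width $r$ enters.
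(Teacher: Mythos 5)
Your proof is correct (including your tacit repair of the statement's typo: the conclusion should indeed read $v\leq w$, not $u\leq w$), but it takes a genuinely different route from the paper's. The paper does not argue on the lattice at all: it observes that $\MVPrh\phi(x)=\frac{1}{D_{d,p}r^p}\fint_{B_r}J_p(\phi(x+y)-\phi(x))\dd\mu(y)$ for the discrete measure $\dd\mu(y)=h^d\sum_{y_\alpha\in B_r}\dd\delta_{y_\alpha}(y)$, and then invokes the comparison proof of Section 9.1 in \cite{dTLi20} ``word by word''; that argument is designed for a general averaging measure and merely bounded sub-/supersolutions, for which $\sup_{\Omega_r}(v-w)$ need not be attained. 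You instead exploit exactly what is special about this measure: the equation decouples over the cosets $x+h\Z^d$, each coset meets the bounded set $\Omega_r$ in a \emph{finite} set, so the maximum is attained; then strict monotonicity of $J_p$, combined with the chain $\MVPrh v(x_0)\geq -f(x_0)\geq \MVPrh w(x_0)$ versus the termwise inequality at an interior maximum point, forces equality on all of $x_0+B_r^h$ (a discrete strong maximum principle), and marching by steps $he_1$ carries a positive maximum into the collar $\partial\Omega_r$, where $v-w\leq 0$ --- a contradiction. All the steps check out: the equations hold at every $x\in\Omega$, hence at every coset point; sampled points stay in the coset and in $\Omega_r$; and $\mathrm{dist}(x_0+khe_1,\Omega)\leq h<r$ does place the exit point inside $\partial\Omega_r$ precisely because the collar has width $r$. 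The one caveat, which you correctly flag, is that you need $he_1\in B_r^h$, i.e.\ $h<r$; this is harmless since \eqref{eq:ass} is a standing assumption forcing $h=o(r)$, and some such restriction is genuinely necessary --- if $h\geq r$ then $B_r^h=\{0\}$, so $\MVPrh\equiv 0$ and the proposition as literally stated (``$h,r>0$'') fails. Comparatively: your argument is elementary and self-contained (no appeal to \cite{dTLi20}), avoids all approximate-maximum machinery, and yields the discrete strong maximum principle as a by-product, making transparent why the collar width must dominate the jump length; the paper's route buys uniformity, since the identical proof covers the continuum operator $\Mmvp$ and the semi-discrete scheme at once, and it does not rely on connectivity of the grid graph inside $\Omega$, so it would survive replacing the lattice by a measure with no such structure.
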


The existence of solutions is proved by a monotonicity argument. For this purpose, we need the following $L^\infty$-bound. 

\begin{proposition}[$L^\infty$-bound]\label{Prop:Stability}
Let $p\in (1,\infty)$, let $R>0$ and $u_h$ be the solution (if any) of \eqref{eq:Num1}-\eqref{eq:Num2} corresponding to some $r\leq R$.  Assume \eqref{eq:ass}. Then 
\[
\|u_h\|_{\infty}\leq A,
\]
 for $r$ small enough, with $A>0$ depending on $p, \Omega, f, g$ and $R$ (but not on $r$ and $h$).
\end{proposition}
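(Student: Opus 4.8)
The goal is to establish an $L^\infty$-bound on any solution $u_h$ of the scheme \eqref{eq:Num1}-\eqref{eq:Num2} that is uniform in $r$ and $h$ (for $r$ small). The natural strategy is to construct explicit \emph{barriers}, i.e.\ smooth super- and subsolutions of the discrete problem that dominate $u_h$ via the comparison principle in Proposition \ref{prop:comparison}.

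\textbf{Plan via barriers.} First I would recall that $\Omega$ is bounded, $C^2$, $f\in C(\overline\Omega)$ and $g\in C(\partial\Omega)$, so $M:=\|f\|_{L^\infty(\Omega)}$, $\|g\|_{L^\infty(\partial\Omega)}$ and the diameter of $\Omega$ are all finite. The key observation is that $\MVPrh$ is consistent with $\Delta_p$ by Theorem \ref{thm:MVF}, so it suffices to find a \emph{continuous, $C^2$} function $\Psi$ defined on a neighborhood of $\overline\Omega$ whose \emph{continuous} $p$-Laplacian is strictly negative, $-\Delta_p\Psi\geq M+1$ on $\overline\Omega$, and which dominates $|G|$ on $\partial\Omega_r$. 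A standard choice is a function of the form
\[
\Psi(x)=\|g\|_{L^\infty(\partial\Omega)}+K\bigl(e^{\lambda L}-e^{\lambda x_1}\bigr),
\]
or a quadratic/exponential profile adapted to the diameter $L$ of $\Omega$ in the $x_1$-direction, with $K,\lambda>0$ chosen so that $-\Delta_p\Psi\geq M+1$ pointwise on $\overline\Omega$; for the exponential barrier $-\Delta_p\Psi$ can be computed explicitly and made as large as desired by increasing $K$. Because $\Psi\in C^2$ and $\nabla\Psi\neq 0$, Theorem \ref{thm:MVF} gives
\[
-\MVPrh\Psi(x)=-\Delta_p\Psi(x)+o_h(1)\geq M+1+o_h(1)\geq M\geq f(x)
\]
for all $x\in\Omega$ once $r$ (hence $h$) is small enough, with the $o_h(1)$ absorbed uniformly over $\overline\Omega$ by the compactness of $\overline\Omega$ and the uniformity of the consistency estimate for the fixed $C^2$ test function $\Psi$. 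Enlarging the additive constant if necessary also guarantees $\Psi\geq G$ on $\partial\Omega_r$, since $G$ is a continuous extension of $g$ on the thin shell $\partial\Omega_r$ and is thus uniformly bounded for small $r$. Then $\Psi$ is a supersolution of the discrete problem, and Proposition \ref{prop:comparison} yields $u_h\leq\Psi\leq A$ on $\Omega_r$ for a constant $A$ depending only on $p,\Omega,f,g,R$. The symmetric construction with $-\Psi$ gives the lower bound $u_h\geq-A$, completing the proof.

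\textbf{Main obstacle.} The delicate point is making the consistency error uniform: Theorem \ref{thm:MVF} is stated pointwise with an $o_h(1)$ remainder, so I must check that for the \emph{fixed} barrier $\Psi$ the remainder is small uniformly over $\overline\Omega$, not merely at each point. This is where the $C^2$-regularity of $\Psi$ up to a neighborhood of $\overline\Omega$ and the non-vanishing of $\nabla\Psi$ are essential, and where one must ensure the threshold on $r$ is independent of the point $x$. One must also verify that the evaluation of $\MVPrh\Psi(x)$ makes sense, i.e.\ that $\Psi$ is defined on $B_r(x)$ for every $x\in\Omega$, which is arranged by defining $\Psi$ on the slightly larger set $\Omega_R$ and taking $r\leq R$. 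Finally, one checks that the constant $A$ produced by the barrier depends on the stated quantities but not on $r$ or $h$, which follows because $K,\lambda$ and the additive constant are fixed once $M$, $\|g\|_\infty$ and $L$ are fixed.
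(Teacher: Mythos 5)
Your proposal is correct and follows essentially the same route as the paper, which simply invokes the proof of Proposition 9.2 in \cite{dTLi20}: an explicit barrier for the continuous $p$-Laplace equation, converted into a barrier for the scheme \eqref{eq:Num1}-\eqref{eq:Num2} via the consistency result of Theorem \ref{thm:MVF}, followed by the comparison principle of Proposition \ref{prop:comparison}. Your attention to the uniformity of the $o_h(1)$ consistency error over $\overline{\Omega}$ and to the non-vanishing gradient of the exponential barrier (important for $p<2$) correctly identifies the points that make the cited argument go through.
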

\begin{proof}
See the proof of Proposition 9.2 \cite{dTLi20}.  The proof is based on an explicit barrier for the $p$-Laplace equation, which by Theorem \ref{thm:MVF} gives a  barrier for \eqref{eq:Num1}-\eqref{eq:Num2}. 
\end{proof}
In order to prove the existence we also need a two step iteration process. For that purpose we define
\[
L[\psi,\phi](x):= \frac{1}{D_{d,p}r^p} \fint_{ B_r} J_p(\phi(x+y)-\psi(x) )\dd \mu(y).
\]
We have the following result.
\begin{lemma}\label{lemma:L}
Let $r>0$
and $\phi\in L^\infty(\Omega_r)$.
\begin{enumerate}[\rm (a)]
\item\label{lemma:L-item1} Then there exists a unique $\psi\in L^\infty(\Omega)$ such that
$-L[\psi,\phi](x)=f(x)$ for all $x\in \Omega$.
\item\label{lemma:L-item2}  Let $\psi_1$ and $\psi_2$ be such that
$
- L[\psi_1,\phi](x) \leq  f(x)$ and $- L[\psi_2,\phi](x)\geq f(x)$ for all $x\in \Omega
$,
then $\psi_1\leq \psi_2$ in $\Omega$.
\end{enumerate}
\end{lemma}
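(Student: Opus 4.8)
The key structural observation is that $L[\psi,\phi](x)$ depends on $\psi$ \emph{only through its value at the single point $x$}; the unknown function enters nowhere else inside the integral. Consequently the equation $-L[\psi,\phi](x)=f(x)$ decouples into a family of independent scalar equations, one for each $x\in\Omega$. To exploit this, the plan is to fix $x\in\Omega$ and introduce the real function
\[
F_x(t):=-\frac{1}{D_{d,p}r^p}\fint_{B_r}J_p(\phi(x+y)-t)\dd\mu(y),\qquad t\in\R,
\]
so that solving $-L[\psi,\phi](x)=f(x)$ amounts to finding a scalar $t=\psi(x)$ with $F_x(t)=f(x)$.

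For part \eqref{lemma:L-item1}, I would first record the elementary properties of $J_p(s)=|s|^{p-2}s$: for $p>1$ it is continuous, strictly increasing, and a bijection of $\R$ onto $\R$. Since $t\mapsto \phi(x+y)-t$ is decreasing, $t\mapsto J_p(\phi(x+y)-t)$ is strictly decreasing, and integrating against the positive measure $\mu$ (for which $\mu(B_r)>0$) preserves strict monotonicity; hence $F_x$ is continuous and strictly increasing, with $F_x(t)\to\pm\infty$ as $t\to\pm\infty$. By the intermediate value theorem there is thus exactly one $\psi(x)$ with $F_x(\psi(x))=f(x)$, which gives pointwise existence and uniqueness. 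It then remains to check $\psi\in L^\infty(\Omega)$: I would obtain an explicit bound by noting that if $t\ge \|\phi\|_\infty+s$ with $s>0$ then $\phi(x+y)-t\le -s$, whence $J_p(\phi(x+y)-t)\le -s^{p-1}$ and $F_x(t)\ge s^{p-1}\mu(B_r)/(D_{d,p}r^p|B_r|)$. Comparing with $f(x)\le\|f\|_\infty$ forces $s$ to be controlled in terms of $\|f\|_\infty$, and a symmetric argument bounds $\psi$ from below; this yields $\|\psi\|_\infty\le \|\phi\|_\infty+C$ with $C$ depending only on $\|f\|_\infty,p,d,r,h$. Measurability of $x\mapsto\psi(x)$ follows since $\psi(x)=\sup\{t\in\mathbb{Q}:F_x(t)\le f(x)\}$ and, for each fixed $t$, the map $x\mapsto F_x(t)$ is measurable.

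For part \eqref{lemma:L-item2} the conclusion is then immediate. At every $x\in\Omega$ the two hypotheses give $-L[\psi_1,\phi](x)\le f(x)\le -L[\psi_2,\phi](x)$, hence $L[\psi_1,\phi](x)\ge L[\psi_2,\phi](x)$. Viewing $L[\,\cdot\,,\phi](x)$ as a function of the scalar $s=\psi_i(x)$ (with $x$ and $\phi$ frozen), this function equals $-F_x(s)$ and is therefore strictly decreasing, so $L[\psi_1,\phi](x)\ge L[\psi_2,\phi](x)$ forces $\psi_1(x)\le\psi_2(x)$. As $x\in\Omega$ was arbitrary, $\psi_1\le\psi_2$ in $\Omega$.

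The argument has no genuinely hard step: the entire point is the \emph{decoupling}, namely that $L[\psi,\phi]$ sees $\psi$ only pointwise, which reduces the problem to scalar root-finding governed by the strict monotonicity of $J_p$. The only mildly technical matters are the uniform $L^\infty$ bound and the measurability of the pointwise-defined root, both of which are routine.
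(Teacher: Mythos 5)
Your proposal is correct and takes essentially the same route as the paper: the paper's proof simply defers to Lemma 9.3 in \cite{dTLi20}, which is exactly this pointwise decoupling argument --- for fixed $x$ the map $t\mapsto -L[t,\phi](x)$ is continuous, strictly increasing (by strict monotonicity of $J_p$ for $p>1$) and surjective onto $\R$, giving existence and uniqueness of the scalar root for part (a) and the comparison in part (b). Your explicit $L^\infty$ bound and the measurability remark are consistent refinements of that same argument.
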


\begin{proof}
The proof follows as the proof of Lemma 9.3 in \cite{dTLi20}.
\end{proof}
We are finally ready to prove the existence.
\begin{proof}[Proof of  Theorem \ref{thm:ConvNumSch}\eqref{thm:ConvNumSch-item1}]
The proof follows the proof of Proposition 9.4 in \cite{dTLi20}. We spell out some details below. 

The approach for existence is to construct a monotone increasing sequence converging to the solution. Let $\mathcal{B}$ be the barrier constructed in Proposition \ref{Prop:Stability}. Define
\[
u^0_h(x)=
\begin{cases}
\displaystyle\inf_{ \partial \Omega_r} G -\mathcal{B}(x) &x\in \Omega,\\
G(x) &x\in \partial\Omega_r,
\end{cases}
\]
and the sequence $u^k_h$ as the sequence of solutions of 
\[
\begin{cases}
 -  L[u^k_h,u^{k-1}_h](x)= f(x)& x\in \Omega,\\
\qquad \qquad \, \, \, \, u^k_h(x)= G(x)& x\in \partial \Omega_r.
\end{cases}
\]
One can prove that $u^k_h$ exists for all $k$, is nondecreasing (by the monotonicity of $L$) and uniformly bounded (by Proposition \ref{Prop:Stability}). We can then define the pointwise limit
$$
u_h(x):=\lim_{k\to \infty} u^k_h(x).
$$
Due to the the pointwise  convergence
\[
\begin{split}
-f(x)&=\lim_{k\to \infty} L[u^{k+1}_h, u^k_h](x)=  L[\lim_{k\to \infty} u^{k+1}_h, \lim_{k\to \infty} u^k_h](x)=L[u_h, u_h](x)= \MVPrh[u_h](x).
\end{split}
\]
Thus, $u$ is a solution of \eqref{eq:Num1}. Clearly $u_h=G$ in $\partial \Omega_r$ so it is also a solution of  \eqref{eq:Num2}.  The uniqueness follows from Proposition \ref{prop:comparison}.
\end{proof}

\subsection{Monotonicity and consistency} In order to prove convergence of the numerical scheme, we will  need certain monotonicity and consistency properties (we already obtained a uniform bound in Proposition \ref{Prop:Stability}). For a function $\phi:\Omega_r\to\R$ define 
\[
S(r,h,x,\phi(x),\phi):=\begin{cases}
\displaystyle - \frac{h^d}{D_{d,p}\, \omega_d\, r^{p+d}} \sum_{y_\alpha\in B_r} J_p(\phi(x+y_\alpha)-\phi(x))- f(x)  &x\in \Omega,\\
\phi(x)-G(x) &x\in \partial\Omega_r.
\end{cases}
\]
Note that \eqref{eq:Num1}-\eqref{eq:Num2} can be equivalently formulated as
\[
S(r,h,x,u_h(x),u_h)=0 \quad x\in \Omega_r.
\]
We have the following result.
\begin{lemma}\label{lem:propscheme}
Assume \eqref{eq:ass}.
\begin{enumerate}[\rm(a)]
\item\label{lem:propscheme-item-a} \textup{(Monotonicity)} Let $t\in\R$ and $\psi\geq\phi$. Then
\[
S(r,h,x,t,\psi)\leq S(r,h,x,t,\phi)
\]
\item\label{lem:propscheme-item-b}  \textup{(Consistency)}  For all $x\in \overline{\Omega}$ and  $\phi\in C^2( B_R(x))$ for some $R>0$  such that $|\nabla \phi(x)|\neq 0$ we have that
\[
\begin{split}
\limsup_{r\to0, z\to x, \xi \to 0} S(r,h, z&, \phi(z)+\xi+\eta_{r}, \phi+\xi)\leq \left\{\begin{array}{cccl}
-\Delta_p\phi(x)-f(x)& \text{if} &x\in\Omega\\
\max\{-\Delta_p\phi(x)-f(x), \phi(x)-g(x)\}& \text{if} &x\in \partial{\Omega},
\end{array}\right.
\end{split}
\]
\noindent and 
\[
\begin{split}
\liminf_{r\to0, z\to x, \xi \to 0} S(r,h, z&, \phi(z)+\xi-\eta_{r}, \phi+\xi)\geq \left\{\begin{array}{cccl}
-\Delta_p\phi(x)-f(x)& \text{if} &x\in\Omega\\
\min\{-\Delta_p\phi(x)-f(x), \phi(x)-g(x)\}& \text{if} &x\in \partial{\Omega},
\end{array}\right.
\end{split}
\]
where $0\leq \eta_{r}=o(r^p)$ as  $r\to 0^+$.
\end{enumerate}
\end{lemma}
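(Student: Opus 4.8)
The plan is to treat the two parts separately, reducing everything to the pointwise behaviour of $J_p$ and to the consistency already established in Theorem \ref{thm:MVF}. For the monotonicity in part \eqref{lem:propscheme-item-a} I would simply exploit that $t\mapsto J_p(t)=|t|^{p-2}t$ is nondecreasing for every $p>1$. For $x\in\partial\Omega_r$ the value $S(r,h,x,t,\cdot)=t-G(x)$ does not depend on the function slot, so the asserted inequality holds with equality. For $x\in\Omega$, in the difference $S(r,h,x,t,\psi)-S(r,h,x,t,\phi)$ the $f(x)$ terms cancel and one is left with $-\frac{h^d}{D_{d,p}\omega_d r^{p+d}}\sum_{y_\alpha\in B_r}\big(J_p(\psi(x+y_\alpha)-t)-J_p(\phi(x+y_\alpha)-t)\big)$; since $\psi\geq\phi$ each summand is nonnegative by monotonicity of $J_p$, and the outer minus sign gives $S(r,h,x,t,\psi)\leq S(r,h,x,t,\phi)$.

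For the consistency in part \eqref{lem:propscheme-item-b}, the first observation is that the additive constant $\xi$ cancels: for $z\in\Omega$ one has $(\phi+\xi)(z+y_\alpha)-(\phi(z)+\xi\pm\eta_r)=\phi(z+y_\alpha)-\phi(z)\mp\eta_r$, so that $S(r,h,z,\phi(z)+\xi\pm\eta_r,\phi+\xi)=-\MVPrh\phi(z)-f(z)+E_r^{\pm}(z)$ with the perturbation error
\[
E_r^{\pm}(z):=-\frac{h^d}{D_{d,p}\omega_d r^{p+d}}\sum_{y_\alpha\in B_r}\Big(J_p(\phi(z+y_\alpha)-\phi(z)\mp\eta_r)-J_p(\phi(z+y_\alpha)-\phi(z))\Big).
\]
By Theorem \ref{thm:MVF}, together with the fact that the $o_r(1)$ there is uniform for $z$ in a small neighbourhood of $x$ (the $C^2$-norm of $\phi$ and the lower bound on $|\nabla\phi|$ are uniform there) and with the continuity of $\Delta_p\phi$ at $x$ (guaranteed by $|\nabla\phi(x)|\neq0$), one obtains $-\MVPrh\phi(z)-f(z)\to-\Delta_p\phi(x)-f(x)$ as $r\to0$, $z\to x$. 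It then remains to prove $E_r^{\pm}(z)\to0$.

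To control $E_r^{\pm}(z)$, I would estimate $|J_p(a\mp\eta_r)-J_p(a)|$ with $a=\phi(z+y_\alpha)-\phi(z)=O(r)$. For $p\geq2$, Lemma \ref{lem:pineq1} gives $|J_p(a\mp\eta_r)-J_p(a)|\leq C\max(|a|,|a\mp\eta_r|)^{p-2}\eta_r\leq Cr^{p-2}\eta_r$, and since $h^d\,\#\{y_\alpha\in B_r\}\leq Cr^d$ this yields $|E_r^{\pm}(z)|\leq Cr^{-2}\eta_r=o(r^{p-2})\to0$. For the boundary alternative $x\in\partial\Omega$, when $z\in\partial\Omega_r$ one has $S=\phi(z)+\xi\pm\eta_r-G(z)\to\phi(x)-g(x)$ by continuity of $G$ and $G|_{\partial\Omega}=g$; taking the limsup (resp.\ liminf) over all admissible $z\to x$, which may lie either in $\Omega$ or in $\partial\Omega_r$, then produces exactly the maximum (resp.\ minimum) of $-\Delta_p\phi(x)-f(x)$ and $\phi(x)-g(x)$.

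The main obstacle is the error $E_r^{\pm}(z)$ in the degenerate range $p\in(1,2)$, where $J_p$ is not Lipschitz and $J_p'(t)=(p-1)|t|^{p-2}$ is singular at $t=0$, so the bound above is unavailable. Here I would reduce, as in Step 1 of the proof of Theorem \ref{thm:MVF}, to the model integral $\frac{1}{r^{p+d}}\int_{B_r}\big|J_p(\nabla\phi(z)\cdot y\mp\eta_r)-J_p(\nabla\phi(z)\cdot y)\big|\,\dd y$, and split $B_r$ into the region $\{|\nabla\phi(z)\cdot y|>\eta_r\}$, where the mean value theorem gives the bound $C|\nabla\phi(z)\cdot y|^{p-2}\eta_r$, and its complement, where the $(p-1)$-H\"older continuity of $J_p$ gives $C\eta_r^{p-1}$. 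The hypothesis $|\nabla\phi(x)|\neq0$ is essential here: after rotating so that $\nabla\phi(z)\parallel e_1$ it makes $|\nabla\phi(z)\cdot y|$ comparable to $|y_1|$, so that the singular integral $\int_{B_r}|y_1|^{p-2}\,\dd y$ is finite, of order $r^{p+d-2}$. Carrying out this estimate is the delicate point, and I expect the interplay between this singular integral and the prescribed decay $\eta_r=o(r^p)$ (together with the coupling \eqref{eq:ass} between $h$ and $r$) to be the technical heart of the argument, and the most likely place where the subquadratic range $p\in(1,2)$ demands extra care.
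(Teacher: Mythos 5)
Your part (a) and the skeleton of part (b) are correct and coincide with the intended argument: the paper's own proof is just a pointer to Lemma 9.7 in \cite{dTLi20}, whose ingredients --- monotonicity of $J_p$, cancellation of the constant $\xi$, the consistency of Theorem \ref{thm:MVF} (used locally uniformly near $x$, which your parenthetical remark correctly addresses), the splitting of the boundary alternative into $z\in\Omega$ and $z\in\partial\Omega_r$, and a modulus-of-continuity bound on $J_p$ to absorb the $\eta_r$-perturbation --- are exactly the ones you assemble. For $p\geq 2$ your estimate $|E_r^{\pm}(z)|\leq C\eta_r/r^{2}=o(r^{p-2})$ via Lemma \ref{lem:pineq1} is complete and correct.

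The genuine gap is the case $p\in(1,2)$, which you leave as a program --- and, as stated, that program cannot be completed, because the perturbation error is \emph{not} $o(1)$ for arbitrary $0\leq\eta_r=o(r^p)$ in this range. Your own two-region split quantifies this: the region $\{|\nabla\phi(z)\cdot y|>\eta_r\}$ contributes, after normalization by $D_{d,p}r^{p+d}$, a term of order $\eta_r/r^{2}$, and the complementary slab a term of order $\eta_r^{p}/r^{p+1}$; the first requires $\eta_r=o(r^{2})$ (strictly stronger than $o(r^p)$ when $p<2$) and the second fails for small $p$ as well. Moreover $\eta_r/r^{2}$ is sharp, not an artifact: for the linear function $\phi(x)=c\,x_1$ the unperturbed sum vanishes by symmetry and a boundary-layer computation (in $d=1$, shifting the interval of integration by $\eta_r/c$) gives $E_r^{+}=\frac{c^{p-2}}{D_{1,p}}\frac{\eta_r}{r^{2}}\bigl(1+o(1)\bigr)+\mathcal{O}\bigl(h\,\eta_r^{p-1}/r^{p+1}\bigr)$, all terms of one sign, so $E_r^{+}\to\infty$ if, say, $\eta_r=r^{p+\epsilon}$ with $p+\epsilon<2$. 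No refinement of pointwise estimates can rescue this; what is needed is extra decay of $\eta_r$. The route consistent with the paper's hint (``it is essential to use the fact that $J_p$ is H\"older continuous'') is the crude global bound $|J_p(a\mp\eta_r)-J_p(a)|\leq C\eta_r^{p-1}$, which gives $|E_r^{\pm}(z)|\leq C\eta_r^{p-1}/r^{p}\to 0$ once $\eta_r=o(r^{p/(p-1)})$. This stronger decay is harmless for the lemma's only use: in the proof of Theorem \ref{thm:ConvNumSch}\eqref{thm:ConvNumSch-item2} the perturbation is $\eta_r=e^{-1/r}$, which is $o(r^q)$ for every $q>0$. So to close your argument in the subquadratic range you should strengthen the hypothesis to $\eta_r=o(r^{p/(p-1)})$ (or $\eta_r=o(r^2)$ with your mean-value estimate) and note that this suffices for the convergence proof; the statement with bare $\eta_r=o(r^p)$, inherited from \cite{dTLi20}, is an overstatement precisely in the range $p\in(1,2)$ that you flagged as delicate.
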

\begin{proof}
The proof follows as in Lemma 9.7 in \cite{dTLi20}. For part \eqref{lem:propscheme-item-b} it is essential to use the fact that $J_p$ is a H\"older  continuous function, the basic properties of $\limsup$ and $\liminf$ and the consistency of $\MVPrh$ given in Theorem \ref{thm:MVF}.
\end{proof}

\subsection{Convergence} \label{sec:conv} We are now ready to prove the convergence stated in Theorem \ref{thm:dpp}. The idea of the proof originates from \cite{BS91}. The proof is almost the same as the proof of Theorem 2.5 ii) in \cite{dTLi20}.  We point out that it was necessary to adapt the proof in order to make it fit with the definition of viscosity solutions in the case $p\in (1,2)$. Below, we spell out some details. 

First we need another definition of viscosity solutions of the boundary value problem and two auxiliary results that are taken from \cite{dTLi20}.

\begin{definition}[Generalized viscosity solutions of the boundary value problem] \label{def:genvisc}
Let $f\in C(\overline{\Omega})$ and $g\in C(\partial \Omega)$. We say that a lower (resp. upper) semicontinuous function $u$ in $\overline{\Omega}$  is a \emph{generalized viscosity supersolution} (resp. \emph{subsolution}) of \eqref{eq:BVP}-\eqref{eq:BC} in $\overline{\Omega}$ if whenever $x_0 \in \overline{\Omega}$ and $\varphi \in
  C^2( B_R(x_0))$ for some $R>0$ are such that $|\nabla \varphi(x)|\neq 0$ for $x\in B_R(x_0)\setminus\{x_0\}$, 
$$ \varphi(x_0) = u(x_0) \quad \text{and} \quad \varphi(x) \leq
u(x) \ \text{(resp. $\varphi(x)\geq u(x)$)}\quad \text{for all} \quad
x \in  B_R(x_0)\cap\overline{\Omega},$$
then we have
\begin{equation*}
\begin{split}
\lim_{\rho\to 0}\sup_{B_{\rho(x_0)}\setminus\{x_0\}}\left(-\plap\varphi (x)-f(x_0)\right)&\geq 0 \quad \textup{if} \quad x_0\in \Omega\\
 \text{(resp. } \lim_{\rho\to 0}\inf_{B_{\rho(x_0)}\setminus\{x_0\}}\left( -\plap\varphi (x)-f(x_0)\right)& \leq 0\text{)}\\
\max\left\{\lim_{\rho\to 0}\sup_{B_{\rho(x_0)}\setminus\{x_0\}}\left(-\plap\varphi (x)-f(x_0)\right), u(x_0)-g(x_0)\right\}&\geq0 \quad \textup{if} \quad x_0\in \partial\Omega\\
\Big(\text{resp. } \min\left\{\lim_{\rho\to 0}\inf_{B_{\rho(x_0)}\setminus\{x_0\}}\left( -\plap\varphi (x)-f(x_0)\right), u(x_0)-g(x_0)\right\}&\leq0\Big)
\end{split}
\end{equation*}
\end{definition} 
\begin{remark}
 As in Remark \ref{rem:equivviscform}, we note that when either $p\geq2$ or $|\nabla \varphi(x_0)|\not=0$, the limits in the above definition can simply be replaced by $(-\plap \varphi (x_0)- f(x_0))$.
\end{remark}

The following uniqueness result is Theorem 9.5 in \cite{dTLi20}.

\begin{theorem}[Strong uniqueness property]\label{thm:SUP}
Let $\Omega$ be a bounded $C^2$ domain. If $u$ and $v$ are generalized viscosity subsolutions and supersolutions of \eqref{eq:BVP}-\eqref{eq:BC} respectively, then $u\leq v$.\end{theorem}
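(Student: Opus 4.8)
The plan is to establish the inequality as a comparison principle, via the doubling-of-variables method of Crandall--Ishii--Lions, adapted to the degenerate and (for $1<p<2$) singular nature of $\plap$ and to the relaxed boundary condition of Definition \ref{def:genvisc}. Suppose, for contradiction, that $M:=\sup_{\overline\Omega}(u-v)>0$. Since the equation $-\plap u=f$ carries no zeroth-order term, the naive doubling argument only returns a non-strict inequality, so I would first restore strictness by the standard device of comparing $u$ against a perturbation of $v$ by a small strict supersolution. For the homogeneous, translation-invariant operator $\plap$ the required strict barriers are available --- they are precisely the explicit $p$-Laplace barriers already used in Proposition \ref{Prop:Stability} --- and a routine argument, exploiting the homogeneity of $\plap$, turns the supersolution inequality into a strict one while preserving a positive supremum of the (perturbed) difference for the perturbation parameter small. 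It then suffices to rule out a positive supremum of this strict configuration.

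Next I would double the variables: for $j>0$ set
\[
\Phi_j(x,y):=u(x)-v(y)-\tfrac{j}{2}|x-y|^2,
\]
and let $(x_j,y_j)\in\overline\Omega\times\overline\Omega$ maximise $\Phi_j$. The classical compactness lemma gives $j|x_j-y_j|^2\to0$ and $x_j,y_j\to\hat x$, a maximiser of $u-v$ with $(u-v)(\hat x)>0$. Writing $p_j:=j(x_j-y_j)$ for the penalisation gradient and $\plap\varphi=\operatorname{tr}\!\big(A_p(\nabla\varphi)D^2\varphi\big)$ with $A_p(q):=|q|^{p-2}\big(I+(p-2)\hat q\otimes\hat q\big)\ge0$ for $q\neq0$, the theorem on sums produces symmetric matrices $X_j\le Y_j$ with $(p_j,X_j)\in\overline J^{2,+}u(x_j)$ and $(p_j,Y_j)\in\overline J^{2,-}v(y_j)$. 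In the interior case $\hat x\in\Omega$, the sub/supersolution inequalities read, for $j$ large, $\operatorname{tr}(A_p(p_j)X_j)\ge -f(x_j)$ and $\operatorname{tr}(A_p(p_j)Y_j)\le -f(y_j)-c_0$, with $c_0>0$ the strict gap from the barrier perturbation. Subtracting and using $A_p(p_j)\ge0$ together with $X_j-Y_j\le0$ (degenerate ellipticity) gives $0\ge\operatorname{tr}(A_p(p_j)(X_j-Y_j))\ge f(y_j)-f(x_j)+c_0\to c_0>0$, a contradiction, ruling out an interior maximiser.

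The first genuine difficulty is that for $1<p<2$ the matrix $A_p(p_j)$ is singular as $p_j\to0$, so the interior estimate cannot be read off directly when the penalisation gradient degenerates. Here the relaxed formulation of Definition \ref{def:genvisc} is essential: test functions are required to have non-vanishing gradient only \emph{away} from the contact point, and the inequalities are taken as $\lim_{\rho\to0}\sup_{B_\rho(x_0)\setminus\{x_0\}}$, so that $\plap\varphi$ is never evaluated at a critical point. When $p_j\to0$ I would follow the technique for singular degenerate operators without zeroth-order term, either replacing the quadratic penalisation by $|x-y|^{q}$ with $q$ large enough to control the matrices, or passing to the relaxed inequalities against a constant test function at the degenerate maximum; I expect matching this device with the relaxed definition to be the main obstacle.

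Finally, the boundary case $\hat x\in\partial\Omega$ must be excluded. Here I would invoke the generalised boundary condition: the subsolution gives $\min\{\,\cdot\,,u(\hat x)-g(\hat x)\}\le0$ and the supersolution $\max\{\,\cdot\,,v(\hat x)-g(\hat x)\}\ge0$. If the boundary-defect branch is selected on both sides, then $u(\hat x)\le g(\hat x)\le v(\hat x)$, contradicting $(u-v)(\hat x)>0$. To exclude the alternative in which the equation branch is active at the boundary maximum, I would use the $C^2$ regularity of $\partial\Omega$ to construct, near $\hat x$, a local barrier that is a strict classical super- (resp. sub-) solution of $-\plap=f$ touching $v$ (resp. $u$) from the appropriate side; this contradicts the PDE branch of the relaxed condition and leaves only the boundary-defect branch. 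Combining the interior and boundary analyses contradicts $M>0$, and removing the barrier perturbation at the end yields $u\le v$ on $\overline\Omega$.
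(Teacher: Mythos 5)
The paper does not prove this theorem in-text: it is quoted verbatim as Theorem 9.5 of \cite{dTLi20}, where the argument runs in two steps. First, boundary barriers built from the distance function on the $C^2$ domain show that a generalized subsolution (resp.\ supersolution) cannot select the equation branch at the boundary, i.e.\ $u\leq g\leq v$ on $\partial\Omega$ pointwise; your boundary analysis matches this in spirit. Second --- and this is where your plan diverges --- the interior comparison for $-\plap u=f$ is \emph{not} obtained by doubling variables: it is obtained by invoking the equivalence of viscosity and weak (distributional/potential-theoretic) solutions for the nonhomogeneous $p$-Laplace equation (the reason \cite{equivalence2} is cited when the viscosity notion is set up) and then comparing weak sub- and supersolutions by monotonicity of the operator, i.e.\ testing with $(u-v)^+$ and using
\[
\int_\Omega\left(|\nabla u|^{p-2}\nabla u-|\nabla v|^{p-2}\nabla v\right)\cdot\nabla (u-v)^+\dd x\leq 0,
\]
which requires no strictness whatsoever.

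The interior half of your proposal has a genuine gap precisely at the point you call routine: restoring strictness. Since $\plap$ is nonlinear and non-additive, $-\plap(v+\delta\mathcal{B})$ bears no useful relation to $-\plap v-\delta\plap\mathcal{B}$, so perturbing $v$ by a small strict supersolution does not yield a strict supersolution; and the homogeneity device $v_\delta=(1+\delta)v$ (plus constants) gives $-\plap v_\delta\geq(1+\delta)^{p-1}f$, which is a \emph{strict} improvement only where $f$ is bounded away from zero from above. The theorem allows arbitrary continuous $f$, including sign-changing and vanishing $f$, and for such right-hand sides no zeroth-order or scaling perturbation produces the gap $c_0>0$ your doubling argument needs --- this is exactly the known obstruction that makes a direct Crandall--Ishii--Lions comparison proof for $-\plap u=f(x)$ unavailable in general, and it is why \cite{dTLi20} detours through weak solutions. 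Separately, your handling of the singular range $p\in(1,2)$ when $p_j\to0$ is only a pointer: the Ohnuma--Sato/Ishii--Souganidis machinery restricts the admissible test functions at critical points, which is a different relaxation from the punctured-ball $\limsup$ formulation in Definition \ref{def:genvisc}, so ``passing to the relaxed inequalities against a constant test function'' at a degenerate maximum of the doubled function would itself require proof. As it stands, the proposal proves the theorem only under an extra sign hypothesis on $f$ (e.g.\ $f>0$ or $f\equiv0$), not in the stated generality.
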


We also need that a generalized viscosity solution is a (usual) viscosity solution in the case of a bounded $C^2$ domain. The proposition below is Proposition 9.6 in \cite{dTLi20}.

\begin{proposition}\label{prop:equivnotions}
Let $\Omega$ be a bounded $C^2$ domain. Then $u$ is a viscosity subsolution (resp. supersolution) of \eqref{eq:BVP}-\eqref{eq:BC} if and only if $u$ is a generalized viscosity subsolution (resp. supersolution) of \eqref{eq:BVP}-\eqref{eq:BC}.
\end{proposition}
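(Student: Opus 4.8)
The plan is to exploit the fact that the two notions differ \emph{only} at the boundary, thereby reducing the proposition to an equivalence of boundary conditions. Indeed, for any interior point $x_0\in\Omega$ and all sufficiently small $R>0$ one has $B_R(x_0)\cap\overline{\Omega}=B_R(x_0)\cap\Omega$, so the admissible test functions and the required inequalities in Definition~\ref{def:genvisc} coincide verbatim with those of Definition~\ref{pvisc}. Hence both the standard and the generalized notion impose exactly the same interior condition, and it remains only to match the boundary requirements: for subsolutions, the pointwise inequality $u\leq g$ on $\partial\Omega$ against the relaxed condition $\min\{\cdot\,,\,u(x_0)-g(x_0)\}\leq 0$ at boundary points, and symmetrically for supersolutions. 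The easy implication (standard $\Rightarrow$ generalized) then needs no barrier: if $u\leq g$ on $\partial\Omega$ then $u(x_0)-g(x_0)\leq 0$ at every boundary point, so the minimum in Definition~\ref{def:genvisc} is automatically $\leq 0$; combined with the identical interior condition, $u$ is a generalized subsolution, and the supersolution case is symmetric.

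For the reverse implication I would argue by contraposition, using the $C^2$ regularity of $\Omega$ to construct a barrier. Suppose $u$ is a generalized subsolution but $u(x_0)>g(x_0)$ for some $x_0\in\partial\Omega$. Since $\Omega$ is a bounded $C^2$ domain it satisfies the exterior ball condition, so there is a ball $B_\rho(z_0)$ with $\overline{B_\rho(z_0)}\cap\overline{\Omega}=\{x_0\}$ and $|x_0-z_0|=\rho$. I would take the radial barrier
\[
\varphi(x)=u(x_0)+A\big(\rho^{-\beta}-|x-z_0|^{-\beta}\big),
\]
with $\beta>0$ chosen so that $(p-1)(\beta+1)>d-1$ and $A>0$ free. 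A direct computation of $\plap$ on radial functions gives $-\plap\varphi(x_0)=c(\beta,p,d,\rho)\,A^{p-1}$ with $c>0$, which can be made to exceed $f(x_0)$ by taking $A$ large; moreover $\nabla\varphi$ points along the inner normal and is nonzero near $x_0$, so by the remark following Definition~\ref{def:genvisc} the liminf in the boundary condition reduces to the value at the point. Since also $u(x_0)-g(x_0)>0$, both entries of the minimum would be strictly positive, contradicting the generalized subsolution inequality. The supersolution case is handled by reflecting the barrier into a convex spike touching $u$ from below with $-\plap\varphi(x_0)$ arbitrarily negative, contradicting $u(x_0)<g(x_0)$; together these force $u\leq g$ (resp.\ $u\geq g$) on $\partial\Omega$, i.e.\ the pointwise boundary condition.

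The main obstacle is that this radial barrier need not dominate $u$ in the immediate \emph{tangential} vicinity of $x_0$: there $\varphi-u(x_0)$ is only quadratically steep, whereas upper semicontinuity controls the oscillation of $u$ merely qualitatively, so $\varphi$ cannot simply be used as a test function touching exactly at $x_0$. The remedy I would follow is to localize. Fix a small closed ball and let $x^*$ be a maximum point of $u-\varphi$ over $\overline{B_\delta(x_0)}\cap\overline{\Omega}$, which exists since $u$ is upper semicontinuous and $\varphi$ is continuous. Taking $A$ large forces $\varphi-u(x_0)$ to exceed the finite oscillation of $u$ on the sphere $\partial B_\delta(x_0)$, so $x^*$ lies in the open ball; then $\varphi+(u-\varphi)(x^*)$ touches $u$ from above at $x^*$, with $-\plap\varphi(x^*)$ still large by continuity of the barrier near $x_0$. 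Applying the generalized condition at $x^*$ now yields a contradiction: if $x^*\in\Omega$ the interior inequality fails, and if $x^*\in\partial\Omega$ the boundary inequality fails because $u(x^*)\geq\varphi(x^*)\geq u(x_0)>g(x_0)$, while $g(x^*)\to g(x_0)$ as $\delta\to0$ keeps the second entry of the minimum positive as well.

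In summary, the reduction to the boundary and the algebra of the $p$-Laplacian on the radial barrier are routine, the forward implication is immediate, and I expect the genuine work to be concentrated in the localization step of the reverse implication — choosing $\delta$ and $A$ so that the maximum point $x^*$ stays near $x_0$ and cannot escape to where the barrier loses its sign, and checking the two placements of $x^*$ — together with the mirror-image construction for supersolutions.
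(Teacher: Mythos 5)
Your proof is correct, and it fills in essentially the argument that the paper itself omits: the paper simply cites Proposition 9.6 of \cite{dTLi20}, whose proof is the same standard scheme you follow --- the trivial direction via the structure of the $\min$/$\max$ conditions, and the converse via an exterior-ball barrier of the form $|x-z_0|^{-\beta}$ with $(p-1)(\beta+1)>d-1$, made rigorous by maximizing $u-\varphi$ over a small neighborhood and splitting into the cases $x^*\in\Omega$ and $x^*\in\partial\Omega$. Your handling of the tangential obstruction (the barrier being only quadratically steep along $\partial\Omega$, remedied by taking $A$ large relative to the oscillation of $u$ on $\partial B_\delta(x_0)$) is exactly the localization needed, so no gap remains.
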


\begin{proof}[~Proof of Theorem \ref{thm:ConvNumSch}\eqref{thm:ConvNumSch-item2}]  
 Define 
\begin{equation*}
\overline{u}(x)=\limsup_{r\to 0,y\to x} u_h(y), \qquad \underline{u}(x)=\liminf_{r\to 0, y\to x} u_h(y), 
\end{equation*}
where $h\to 0$ as in the hypotheses of Theorem \ref{thm:ConvNumSch}. By definition $\underline{u}\leq\overline{u}$ in $\overline{\Omega}$. If we show that $\overline{u}$ (resp. $\underline{u}$) is a generalized viscosity subsolution (resp. supersolution) of \eqref{eq:BVP}, Theorem \ref{thm:SUP} would imply $\overline{u}\leq \underline{u}$. Thus, $u:=\overline{u}=\underline{u}$ is a generalized viscosity solution of \eqref{eq:BVP} and  $u_h\to u$ uniformly in $\overline{\Omega}$. Proposition \ref{prop:equivnotions} then would imply that $u$ is a viscosity solution of \eqref{eq:BVP}. 

We now sketch how to show that $\overline{u}$ is a generalized viscosity subsolution. First note that  $\overline{u}$ is an upper semicontinuous function by definition, and it is also bounded since $u_h$ is uniformly bounded by Proposition \ref{Prop:Stability}.
 Take $x_0\in \overline{\Omega}$ and $\varphi\in C^2(B_R(x_0))$ such that $\overline{u}(x_0)=\varphi(x_0)$, $\overline{u}(x)<\varphi(x_0)$ if $x\not=x_0$.
We separate the proof into different cases depending of the value of the gradient of $\varphi$ at $x_0$ and the range of $p$.
 
\emph{Case 1: $|\nabla \varphi(x_0)|\not=0$ or $p\geq2$}.  Then, for all $x\in \overline{\Omega}\cap B_R(x_0)\setminus\{x_0\}$, we have that
\begin{equation}\label{x0localmax}
 \overline{u}(x)-\varphi(x)<0=  \overline{u}(x_0)-\varphi(x_0).
 \end{equation}
 We claim that we can find a sequence $(r_n,y_n)\to(0,x_0)$ as $n\to \infty$, with $h_n\to 0$ as in the hypotheses of the theorem, such that
 \begin{equation}\label{eq:famseq}
 u_{h_n}(x)-\varphi(x) \leq u_{h_n}(y_n)-\varphi(y_n)+ e^{-1/r_n}   \quad \textup{for all} \quad x\in \overline{\Omega}\cap B_R(x_0).
 \end{equation}
 This can be argued for as in the proof of Theorem 2.5 ii) in \cite{dTLi20}.
 \begin{comment}
 To show this, we consider a sequence $(r_j,x_j)\to(0,x_0)$ as $j\to \infty$ such that $h_j=o(r_j^{3/2})$ and $\U_{h_j}(x_j)\to \overline{u}(x_0)$, which exists by definition of $\overline{u}$. By the definition of the supremum we can for each $j$, find $y_j$ such that
 \begin{equation}\label{eq:exp}
  \U_{h_j}(y_j)-\varphi(y_j)+ e^{-1/r_j}  \geq \sup_{\overline{B}_R(x_0)}\{\U_{h_j}-\varphi\}.
  \end{equation}
Now extract a subsequence $(r_n,x_n,y_n)\to (0, x_0, \hat{y})$ as $n\to \infty$ for some $\hat{y}\in \overline{\Omega}$. Then,
 \begin{equation*}
 \begin{split}
 0&= \overline{u}(x_0)-\varphi(x_0)\\
 &= \lim_{n\to\infty} \left\{\U_{h_n}(x_n)-\varphi(x_n)\right\}\\
 &\leq \limsup_{n\to\infty}\left\{\U_{h_n}(y_n)-\varphi(y_n)+ e^{-1/r_n}\right\}\\
 &\leq \limsup_{r\to0, y \to \hat{y}}\left\{U_{r}(y)-\varphi(y)+ e^{-1/r}\right\}\\
 &= \overline{u}(\hat{y})- \varphi(\hat{y}),
 \end{split}
 \end{equation*}
 where we in the third inequality have used \eqref{eq:exp}. This together with \eqref{x0localmax} implies that $\hat{y}=x_0$ and thus finishes proof of the claim. 
 \end{comment}
 Choose now $\xi_n:=u_{h_n}(y_n)-\varphi(y_n)$. We have from \eqref{eq:famseq} that,
 \[
 u_{h_n}(x)\leq \varphi(x) + \xi_n + e^{-1/r_n} \quad \textup{for all} \quad x\in \overline{\Omega}\cap B_R(x_0).
 \]
Using Lemma \ref{lem:propscheme}\eqref{lem:propscheme-item-a} we obtain
 \begin{equation*}
 \begin{split} 
 0&=S(r_n,h_n, y_n, u_{r_n}(y_n),u_{h_n})\\
 &=S(r_n,h_n, y_n, \varphi(y_n)+\xi_n,u_{h_n})\\
 &\geq S(r_n,h_n, y_n, \varphi(y_n)+\xi_n,\varphi + \xi_n + e^{-1/r_n} )\\
 &=S(r_n,h_n,y_n, \varphi(y_n)+\xi_n- e^{-1/r_n} ,\varphi + \xi_n ).
 \end{split}
 \end{equation*}
 Note that  $e^{-1/r}=o(r^p)$. By Lemma \ref{lem:propscheme}\eqref{lem:propscheme-item-b}, we have
  \begin{equation*}
 \begin{split} 
 0&\geq \liminf_{r_n\to0,\, y_n\to x_0,\, \xi_n\to0}S(r_n,h_n,y_n, \varphi(y_n)+\xi_n- e^{-1/r_n} ,\varphi + \xi_n )\\
 &\geq \liminf_{r\to0,\, y\to x_0,\, \xi \to0}S(r,h,y, \varphi(y)+\xi- e^{-1/r} ,\varphi + \xi )\\
 &\geq \left\{\begin{array}{cccl}
-\Delta_p\varphi(x_0)-f(x_0)& \text{ if } &x_0\in\Omega,\\
\min\{-\Delta_p\varphi(x_0)-f(x_0), \overline{u}(x_0)-g(x_0)\}& \text{ if } &x_0\in \partial{\Omega},
\end{array}\right.
 \end{split}
 \end{equation*}
 which shows that $\overline{u}$ is a viscosity subsolution and finishes the proof in this case.
 
\emph{Case 2: Let $p\in(1,2)$ and $|\nabla \varphi(x_0)|=0$ such that $\overline u$ is constant in some ball $B_\rho(x_0)$ for $\rho>0$ small enough.}   
Choose $\phi(x)=\overline u(x_0)+|x-x_0|^{\frac{p}{p-1}+1}$.  Then, we can argue as in Case 1 above that
\[
0 \geq \liminf_{r\to0,\, y\to x_0,\, \xi \to0}S(r,h,y, \phi(y)+\xi- e^{-1/r} ,\phi + \xi ),
\]
which implies
\[
0 \geq \liminf_{r\to0,\, y\to x_0}S(r,h,y, \phi(y) ,\phi ),
\]
by the H\"older continuity of $J_p$. Together with Lemma \ref{lem:xbeta} this shows that
 \[
-\Delta_p \overline u(x_0)=0\leq f(x_0).
\]
Hence,  $\overline u$ is a classical subsolution at $x_0$ and thus also a viscosity subsolution.

\emph{Case 3:  Let $|\nabla \varphi(x_0)|=0$ and assume that $\overline u$ is not constant in any ball $B_\rho(x_0)$}.  Then we may argue as in the proof of Proposition 2.4 in \cite{AR18} to prove that there is a sequence $y_k\to 0$ such that the function $\varphi_k(x)=\varphi(x+y_k)$ touches $\overline u$ from above at $x_k=x_0+y_k$ and $|\nabla \varphi_k(x_k)|\neq 0$ for all $k$. As in Case 1, this gives
\[
 \begin{split}
 0&\geq  \left\{\begin{array}{cccl}
\displaystyle-\Delta_p\varphi(x_k)-f(x_k)& \text{ if } &x_k\in\Omega,\\
\displaystyle\min\{-\Delta_p\varphi(x_k)-f(x_k), \overline{u}(x_k)-g(x_k)\}& \text{ if } &x_k\in \partial{\Omega},
\end{array}\right.
\end{split}
\]
for all $k$. Passing $k\to \infty$, we obtain
\[
 \begin{split}
 0&\geq  \limsup_{k\to\infty} \left\{\begin{array}{cccl}
\displaystyle(-\Delta_p\varphi(x_k)-f(x_k))& \text{ if } &x_k\in\Omega,\\
\displaystyle\min\{-\Delta_p\varphi(x_k)-f(x_k), \overline{u}(x_k)-g(x_k)\}& \text{ if } &x_k\in \partial{\Omega},
\end{array}\right.\\
&\geq \left\{\begin{array}{cccl}
\displaystyle \lim_{\rho\to 0}\inf_{B_{\rho}(x_0)\setminus\{x_0\}}\left( -\plap\varphi (x)-f(x)\right)& \text{ if } &x_0\in\Omega,\\
\displaystyle\min\{ \lim_{\rho\to 0}\inf_{B_{\rho}(x_0)\setminus\{x_0\}}\left( -\plap\varphi (x)-f(x)\right), \overline{u}(x)-g(x)\}& \text{ if } &x_0\in \partial{\Omega},
\end{array}\right.
\end{split}
\]

which is the desired inequality. This completes the proof.
 \end{proof}

\section{Solution of the nonlinear system}\label{sec:NonlinearSis}
	When we discretize the Dirichlet problem \eqref{eq:BVP}-\eqref{eq:BC}, we need to solve the nonlinear system \eqref{eq:NumDisc1}-\eqref{eq:NumDisc2}. In contrast to the situation in \cite{ObermanpLap}, our system is not based on the mean value formula for the $\infty$-Laplacian which is not differentiable. Instead, it is based on an implicit and differentiable mean value property. This system is therefore well suited for Newton-Raphson, which is one of the methods we have employed. We have also chosen to use an explicit method based on the convergence to a steady state of an evolution problem, for which we can guarantee the convergence. The Newon-Raphson method is fast (as explained by Oberman in \cite{ObermanpLap}) since the number of iterations required to solve the system is independent of its size. This is not the case for our explicit method that is conditioned by the CFL-type condition \eqref{eq:CFL} in Section \ref{subsec:explicit}. See Table \ref{table:nonlinsys} for a more detailed comparison between the efficiency in terms of speed of the two methods. We describe the two methods in detail below.

\subsection{Newton-Raphson} The method we have used is the standard one. Let $F:\R^k \to \R^k$ for some $k\geq1$. In order to solve the system 
$$
F (z) = 0, 
$$
we use the iteration 
$$
z_{n+1}=z_n-( J_F(z_n))^{-1} F(z_n).
$$
where $J_F$ denotes the Jacobian matrix of the function $F$. In our particular case we have that $k=\# \{\G_h \cap \Omega_r$\}.

Let us illustrate the form of $F$ and $J_F$ in the one dimensional case. Let  $\gamma=\min\{\beta\in \Z \ : \ x_\beta\in \Omega_r\}$, and $z_i=U_{\gamma + i-1}$. Consider
\[
F(z_1,\ldots,z_k)=\left(
\begin{matrix}
F_1(z_1,\ldots,z_k)\\
F_2(z_1,\ldots,z_k)\\
\vdots\\
F_k(z_1,\ldots,z_k)
\end{matrix}
\right)
\] 
where $F_i:\R^k\to\R$ for $i=1,\ldots,k$ are given by
\[
F_i(z_1,\ldots,z_k)=
\begin{cases}
z_i-G_{\gamma+i-1}\quad &\textup{if} \quad x_{\gamma+i-1}\in \partial \Omega_r,\\
\frac{h^d}{D_{d,p}\, \omega_d\, r^{p+d}} \displaystyle\sum_{x_\alpha\in B_r} J_p(z_{i+\alpha}-z_i) - f_{\gamma+i-1} \quad &\textup{if} \quad x_{\gamma+i-1}\in \Omega.
\end{cases}
\]
Let $(J_F(z))_{i,j}=(J_F(z_1,\ldots,z_k))_{i,j}$ denote the component of the Jacobian matrix of $F$ corresponding to the $i$-th and $j$-th column. If $i$ is such that $x_{\gamma+i-1}\in\partial \Omega_r$ then
\[
(J_F(z))_{i,j}=\begin{cases}
1 \quad \textup{if} \quad j=i\\
0 \quad \textup{if} \quad j\not=i
\end{cases}
\] 
while if $x_{\gamma+i-1}\in \Omega$ then
\[
(J_F(z))_{i,j}=\frac{(p-1)h^d}{D_{d,p}\, \omega_d\, r^{p+d}}\times\begin{cases}
|z_{i+j}-z_i|^{p-2} \quad &\textup{if} \quad j\not =i \quad \textup{and} \quad  x_j\in B_r,\\
\displaystyle-\sum_{x_\alpha \in B_r}|z_{i+\alpha}-z_i|^{p-2}  \quad &\textup{if} \quad j=i,\\
0 &\textup{otherwise}.
\end{cases}
\]

\subsection{Explicit method}\label{subsec:explicit}
We consider $\{U^m\}_{m\in \N}$ to be the sequence of solutions $U^m:\Omega^h_r\to \R$ of 
\begin{equation}
\label{eq:expsch}
U^{m+1}_\beta= U^m_\beta+\tau_m \MVPrh U^m_\beta+\tau_m  f_\beta, \quad x_\beta \in \Omega^h
\end{equation}
where $U^0$ is some initial data, $U^m = G$ on $\partial\Omega_{r}^h$ and $\{\tau_m\}_{m\in \N}>0$ are certain discretization parameters.  The idea here is that, as $m\to\infty$, $U^m$ converges to the solution $U$ of \eqref{eq:NumDisc1}-\eqref{eq:NumDisc2}. 
This convergence holds given a nonlinear counterpart to the CFL-stability condition. Actually, we also need to slightly modify \eqref{eq:expsch} to ensure convergence; in words of Oberman in \cite{Ob2}, we need to ensure that our operator is \em proper\em.  

More precisely, given $\veps>0$, let $\{(U_\veps)^m\}_{m=1}^\infty$ be the solution of
\begin{equation}
\label{eq:expscheps}
(U_\veps)^{m+1}_\beta= (U_\veps)^m_\beta+\tau_m \MVPrh (U_\veps)^m_\beta- \tau_m\veps (U_\veps)^m_\beta+\tau  f_\beta, \quad x_\beta \in \Omega^h
\end{equation}
subject to the same initial and boundary conditions as in \eqref{eq:expsch}.
Let $U_\veps$ be the solution of  
\begin{empheq}[left=\empheqlbrace]{align}
\label{eq:NumDisceps1}
-\MVPrh (U_\veps)_\beta+ \veps (U_\veps)_\beta=  f_\beta, &\quad  x_\beta\in \Omega^h,\\
\label{eq:NumDisceps2}\ (U_\veps)_\beta=G_\beta, &\quad  x_\beta\in \partial \Omega_r^h.
\end{empheq}
It is standard to check, using the techniques of Section \ref{sec:existanduniquenumsol} that $U_\veps$ exists, is unique and uniformly bounded in $r,h$ and $\veps$. 
We have the following result.
\begin{lemma}
Let $p\geq2$ and $\{U_\veps^m\}_{m=1}^\infty$ be the solution of \eqref{eq:expscheps} with any bounded initial condition $U_\veps^0$. Let also $U$ be the solution of \eqref{eq:NumDisc1}-\eqref{eq:NumDisc2}. Assume that 
\begin{equation}\label{eq:CFL}\tag{CFL}
0<\tau_m\leq \min\left\{1, \frac{r^p}{(p-1)2^{p-2}L_m^{p-2}}\frac{D_{d,p}}{(1+\sqrt{d})^d}(1-\veps)\right\} \quad \textup{with} \quad L_m = \max( \|U^m_\veps\|_{\ell^\infty}, \|   U_\veps \|_{\ell^\infty}  ). 
\end{equation}
Then
\[
\max_{x_\alpha\in \Omega}\left|(U_\veps)^m_\alpha-U_\alpha\right|=2L_0 (1-\tau\veps)^m + o_\veps(1),
\]
where $\tau=\inf_{m\in \N} \{\tau_m\}$.
\end{lemma}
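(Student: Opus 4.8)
The plan is to prove convergence of the explicit scheme $(U_\veps)^m$ to the stationary solution $U_\veps$ of \eqref{eq:NumDisceps1}-\eqref{eq:NumDisceps2} by a contraction argument in the $\ell^\infty$-norm, and then to account for the perturbation between $U_\veps$ and the true solution $U$ via the final $o_\veps(1)$ term. First I would set $E^m_\beta := (U_\veps)^m_\beta - (U_\veps)_\beta$ and note that $E^m$ vanishes on $\partial\Omega_r^h$, since both sequences share the boundary datum $G$. Subtracting the stationary equation \eqref{eq:NumDisceps1}, written as $(U_\veps)_\beta = (U_\veps)_\beta + \tau_m \MVPrh (U_\veps)_\beta - \tau_m\veps (U_\veps)_\beta + \tau_m f_\beta$, from the scheme \eqref{eq:expscheps}, I would obtain an evolution equation for the error of the schematic form
\[
E^{m+1}_\beta = E^m_\beta + \tau_m\left(\MVPrh (U_\veps)^m_\beta - \MVPrh (U_\veps)_\beta\right) - \tau_m\veps E^m_\beta, \quad x_\beta\in\Omega^h.
\]

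The crux is to show that the map $E^m\mapsto E^{m+1}$ is a contraction with factor $(1-\tau_m\veps)$ in the supremum norm, and this is where the \eqref{eq:CFL} condition enters. The key step is to estimate the difference $\MVPrh (U_\veps)^m_\beta - \MVPrh (U_\veps)_\beta$: writing out $\MVPrh$ from \eqref{eq:discPlap}, this is a weighted sum of differences $J_p((U_\veps)^m_{\beta+\alpha}-(U_\veps)^m_\beta) - J_p((U_\veps)_{\beta+\alpha}-(U_\veps)_\beta)$. Since $p\geq2$, the function $J_p$ is locally Lipschitz, with Lipschitz constant on an interval of radius $2L_m$ controlled by $(p-1)(2L_m)^{p-2}$; applying a convexity/monotonicity argument at the index $\beta$ where $|E^m|$ is maximized (so that the diagonal contribution has a favourable sign), one bounds the increment of $\MVPrh$ by $\frac{(p-1)2^{p-2}L_m^{p-2}}{r^p}\cdot\frac{(1+\sqrt d)^d}{D_{d,p}}\|E^m\|_{\ell^\infty}$, where the factor $(1+\sqrt d)^d/D_{d,p}$ absorbs the number of grid points in $B_r$ times the normalizing constants, using $h^d\#\{y_\alpha\in B_r\}\leq |B_{r+\sqrt d h}|$. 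The precise algebra is routine but must be arranged so that the scheme's diagonal term $-J_p'$ and the off-diagonal terms combine into the single prefactor appearing in \eqref{eq:CFL}. Evaluating at the maximizing index and using the \eqref{eq:CFL} bound on $\tau_m$ to guarantee $\tau_m\,\frac{(p-1)2^{p-2}L_m^{p-2}}{r^p}\cdot\frac{(1+\sqrt d)^d}{D_{d,p}}\leq 1-\veps$ yields $\|E^{m+1}\|_{\ell^\infty}\leq (1-\tau_m\veps)\|E^m\|_{\ell^\infty}$.

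Iterating this contraction estimate and using $\tau=\inf_m\tau_m$ gives $\|E^m\|_{\ell^\infty}\leq \prod_{j=0}^{m-1}(1-\tau_j\veps)\,\|E^0\|_{\ell^\infty}\leq (1-\tau\veps)^m\|E^0\|_{\ell^\infty}$. Since $U^0_\veps$ and $U_\veps$ are both bounded by some constant, I would bound $\|E^0\|_{\ell^\infty}\leq 2L_0$, recovering the leading term $2L_0(1-\tau\veps)^m$. Finally, to pass from $U_\veps$ to $U$, I would invoke the standard stability estimate for the zeroth-order perturbation: $U_\veps$ solves \eqref{eq:NumDisceps1} while $U$ solves \eqref{eq:NumDisc1}, so a comparison argument (Proposition \ref{prop:comparison}, adapted to the $\veps$-shifted operator as indicated just before the lemma) together with the uniform bound on $U_\veps$ shows $\max_{x_\alpha\in\Omega}|(U_\veps)_\alpha - U_\alpha| = o_\veps(1)$ as $\veps\to0$. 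Combining via the triangle inequality $|(U_\veps)^m_\alpha - U_\alpha|\leq |E^m_\alpha| + |(U_\veps)_\alpha - U_\alpha|$ delivers the stated estimate.

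The main obstacle is the contraction estimate for $\MVPrh$, specifically the bookkeeping that turns the locally Lipschitz bound for $J_p$ on the range of radius $2L_m$ into exactly the constant $\frac{(p-1)2^{p-2}L_m^{p-2}}{r^p}\frac{(1+\sqrt d)^d}{D_{d,p}}$ dictated by \eqref{eq:CFL}. One must evaluate the error at the extremal index to exploit the sign of the diagonal term (the discrete maximum principle), and one must carefully count the grid points in $B_r$ through $h^d\#\{y_\alpha\in B_r\}$ to produce the $(1+\sqrt d)^d$ factor; the restriction $p\geq2$ is essential here, since for $p<2$ the derivative of $J_p$ is unbounded near $0$ and this Lipschitz bound fails, explaining the hypothesis of the lemma.
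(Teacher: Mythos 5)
Your contraction argument is essentially the paper's proof: the paper also subtracts the stationary equation \eqref{eq:NumDisceps1} from the scheme \eqref{eq:expscheps}, applies the mean value theorem to $J_p$ (with $\xi_{\alpha,\beta}$ between the two increments, so $|J_p'(\xi_{\alpha,\beta})|\leq (p-1)2^{p-2}L_m^{p-2}$, which is where $p\geq 2$ enters), rewrites the update so that the coefficient of $E^m_\beta$ is $1-\tau_m\veps-\tau_m K\sum_\alpha J_p'(\xi_{\alpha,\beta})$, and uses \eqref{eq:CFL} together with the counting bound $h^d\,\#\{y_\alpha\in B_r\}\leq |B_{r+\sqrt{d}h}|\leq |B_{(1+\sqrt{d})r}|=(1+\sqrt{d})^d|B_r|$ to conclude that all coefficients are nonnegative and sum to at most $1-\tau_m\veps$; taking $\ell^\infty$-norms (no maximizing index is actually needed, since $J_p'\geq 0$ makes the off-diagonal weights nonnegative) gives $\|E^{m+1}\|_{\ell^\infty}\leq (1-\tau_m\veps)\|E^m\|_{\ell^\infty}$, then iteration and $\|E^0\|_{\ell^\infty}\leq 2L_0$ as you say. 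Up to this point your proposal and the paper coincide.

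The genuine gap is in your final step, $\max_{x_\alpha}|(U_\veps)_\alpha-U_\alpha|=o_\veps(1)$. The comparison/stability route you invoke does not deliver this. Indeed, $U$ solves $-\MVPrh U_\beta+\veps U_\beta=f_\beta+\veps U_\beta$, i.e.\ it solves the $\veps$-shifted problem with right-hand side perturbed by $\veps U$; the standard stability estimate for the proper operator $-\MVPrh+\veps\,\mathrm{Id}$ (obtained by evaluating at an extremum of the difference, where monotonicity of $J_p$ kills the $\MVPrh$ terms) yields only
\[
\|U_\veps-U\|_{\ell^\infty}\leq \frac{1}{\veps}\,\|\veps U\|_{\ell^\infty}=\|U\|_{\ell^\infty},
\]
a uniform $O(1)$ bound in which the factor $\veps^{-1}$ exactly cancels the size $\veps$ of the perturbation. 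There is no uniform ellipticity to improve this quantitatively, since $J_p'$ vanishes at the origin for $p>2$. The paper circumvents this with a qualitative compactness--uniqueness argument: for fixed $r,h$ the set $\Omega_r^h$ is finite and $U_\veps$ is uniformly bounded in $\veps$, so along a subsequence $U_{\veps_j}$ converges pointwise to some $V$; passing to the limit in \eqref{eq:NumDisceps1}-\eqref{eq:NumDisceps2} (the scheme is continuous in its entries and the term $\veps U_\veps$ vanishes) shows $V$ solves \eqref{eq:NumDisc1}-\eqref{eq:NumDisc2}, and uniqueness (Proposition \ref{prop:comparison}) forces $V=U$, whence the full family converges and $\|U_\veps-U\|_{\ell^\infty}=o_\veps(1)$. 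Replacing your stability claim with this argument closes the gap; as written, your proof establishes only $\max_{x_\alpha}|(U_\veps)^m_\alpha-U_\alpha|\leq 2L_0(1-\tau\veps)^m+O(1)$.
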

\begin{proof} 
Since $U_\veps$ is uniformly bounded in a discrete finite set, there exists a convergent subsequence $U_{\veps_j}$ converging to some $V$ pointwise. It is also standard to show that $V$ is indeed a solution of \eqref{eq:NumDisc1}-\eqref{eq:NumDisc2}. By uniqueness, $V=U$ and the full sequence $U_{\veps}$ converges, i.e.,
\[
\|U_{\veps}-U\|_\infty=o_\veps(1).
\]
On the other hand, by subtracting  the equations for $U_\veps$ and $(U_\veps)^m$ we get
\begin{equation*}
\begin{split}
(U_\veps&)^{m+1}_\beta-(U_\veps)_\beta \\
&= ((U_\veps)^{m}_\beta-(U_\veps)_\beta)(1-\tau_m\veps)  + \tau_m K\sum_{y_\alpha\in B_r}\left( J_p((U_\veps)^m_{\beta+\alpha}-(U_\veps)^m_{\beta})-J_p((U_\veps)_{\beta+\alpha}-(U_\veps)_{\beta}) \right) \\
&= ((U_\veps)^{m}_\beta-(U_\veps)_\beta)(1-\tau_m\veps)  + \tau_m K\sum_{y_\alpha\in B_r} J'_p(\xi_{\alpha,\beta})\left( ((U_\veps)^m_{\beta+\alpha}-(U_\veps)^m_{\beta}) -((U_\veps)_{\beta+\alpha}-(U_\veps)_{\beta}) \right) \\
&= ((U_\veps)^{m}_\beta-(U_\veps)_\beta)\left(1-\tau_m\veps- \tau_m K\sum_{y_\alpha\in B_r}J'_p(\xi_{\alpha,\beta})\right) +  \tau_m K\sum_{y_\alpha\in B_r} J'_p(\xi_{\alpha,\beta})\left( (U_\veps)^m_{\beta+\alpha}-(U_\veps)_{\beta+\alpha} \right)
\end{split}
\end{equation*}
where $K:=\frac{h^d}{|B_r|D_{d,p}r^p}$ and $\xi_{\alpha,\beta}$ lies between $(U_\veps)^m_{\beta+\alpha}-(U_\veps)^m_\beta$ and $(U_\veps)_{\beta+\alpha}-(U_\veps)_\beta$, so that $|\xi_{\alpha,\beta}|\leq 2L_m$ and $|J_p'(\xi_{\alpha,\beta})|=(p-1)|\xi_{\alpha,\beta}|^{p-2}\leq (p-1)2^{p-2}L^{p-2}_m$ since $p\geq2$.
Therefore, when $r$ is small enough
\begin{equation*}
\begin{split}
\tau_m K\sum_{y_\alpha\in B_r}J'_p(\xi_{\alpha,\beta})\leq(1-\veps)\frac{1}{|B_r|}\frac{1}{(1+\sqrt{d})^d}\sum_{y_\alpha\in B_r} h^d\leq (1-\veps) \frac{| B_{r+\sqrt{d}h}|}{|B_{r+\sqrt{d}r}|}\leq  (1-\veps),
\end{split}
\end{equation*}
where we used \eqref{eq:CFL} and that $h\leq r$ (since $h=o(r)$ with $r$ small enough). In this way,
\[
1-\tau_m\veps- \tau_m K\sum_{y_\alpha\in B_r}J'_p(\xi_{\alpha,\beta})\geq \veps(1-\tau_m)\geq0.
\]
Clearly $J_p'\geq0$, and then
\begin{equation*}
\begin{split}
\|&(U_\veps)^{m+1}-U_\veps\|_{\ell^\infty} \\
&\leq \|(U_\veps)^{m}-U_\veps\|_{\ell^\infty}\left(1-\tau_m\veps- \tau_m K\sum_{y_\alpha\in B_r}J'_p(\xi_{\alpha,\beta})\right) +  \tau_m K\sum_{y_\alpha\in B_r} J'_p(\xi_{\alpha,\beta}) \|(U_\veps)^{m}-U_\veps\|_{\ell^\infty}\\
&\leq\|(U_\veps)^{m}-U_\veps\|_{\ell^\infty}\left(1-\tau_m\veps\right)\\
&\leq\|(U_\veps)^{0}-U_\veps\|_{\ell^\infty}\left(1-\tau_m\veps\right)^{m+1}\\
&\leq  2L_0\left(1-\tau \veps\right)^{m+1}.
\end{split}
\end{equation*}
The results follows using the triangle inequality:
\[
\left\|(U_\veps)^m-U\right\|_{\ell^\infty}\leq \|(U_\veps)^m-U_\veps\|_{\ell^\infty} +\|{U}_{\veps}-U\|_{\ell^\infty}\leq  2L_0 (1-\tau\veps)^m + o_\veps(1).
\]
\end{proof}

\begin{remark}
The fact that $U_\veps$ is uniformly  bounded together with the bound $\|U^{m}_\veps-U_\veps\|_\infty \leq 2L_0$ ensures that $L_m$  is uniformly bounded from above so that  $\{\tau_m\}_{m\in N}$ can be taken uniformly bounded from below.  
\end{remark}
 
 In the case $1<p<2$, we used a regularization of the singularity in $\MVPrh$ in order to make it a Lipschitz map. This could be done for example by modifying the nonlinearity with an extra approximation parameter $\delta>0$ and replacing $J_p$ by $J_p^\delta$ given by
\[
J_p^\delta(t)=\left\{\begin{split}
&J_p(t+\delta) -J_p(\delta) \quad \ \ \ \textup{if} \quad t\geq0,\\
&J_p(t-\delta) -J_p(-\delta) \quad \textup{if} \quad t<0.
\end{split}\right.
\]
The drawback of this type of regularization is that the condition \eqref{eq:CFL} becomes more and more restrictive as $\delta\to0$. This regularization is typically used when dealing with explicit schemes for fast diffusion equations (see for example \cite{dTEnJa18, dTEnJa19})

\subsection{Comparison between the solvers} 
 We now present a comparison of the above methods regarding the number of iterations and computational time\footnote{ Naturally, this depends on the code and the computational power of the computer used, but we have chosen to include it for the sake of completeness.}. 

 We have solved the system \eqref{eq:NumDisc1}-\eqref{eq:NumDisc2} for $p=3$, in dimension $d=1$ with $\Omega=(-1,1)$, $f\equiv 1$ and $g\equiv0$. As starting value for the iteration we have chosen  $u_0(x)=(1-|x|)_+$. Finally, for the explicit solver we have chosen $\tau$ to satisfy \eqref{eq:CFL}.
We have stopped the solver when difference between two consecutive iterations is less that $10^{-16}$.

In Table \ref{table:nonlinsys} we present the results for different values of $r$ and its corresponding $h$ satisfying \eqref{eq:ass} (in this case $h=\frac{r^{3/2+0.1}}{4}$).

\begin{table}[h!]
\centering
\begin{tabular}{l|l|l|l|l|l|l|l}
 &   $r$   &$h$ & $k$ & It-E & T-E& It-NR & T-NR \\ \hline
& 0.2 & 0.019037 & 127 & 4272 & 0.59 & 8 & 0.03 \\
& 0.1& 0.006279 & 351 & 17475 & 9.74 & 8 & 0.1 \\
& 0.05& 0.002071 & 1014 & 63164  & 166.87 & 9 & 0.84 \\
& 0.025& 0.000683 & 3000 & 250901 & 3076.04&  9 & 11.43 \\
& 0.0125& 0.000025 & 8984 & $\sim 10^7 $ &$\sim 10^5$ & 9 & 381.28                                       
\end{tabular}
\vspace{4mm}

\caption{A comparison of the efficiency of our methods used to solve the nonlinear system for $p=3$. Here $k$ denotes the size of the system, It-E and It-NR are the number of iterations needed by the explicit and the Newton-Raphson solver respectively, and T-E and T-NR are the times (in seconds) spent to solve the system by the explicit and the Newton-Raphson solver respectively.}
\label{table:nonlinsys}
\end{table}

As the table shows, the Newton-Raphson solver is fast in the sense that the number of iterations does not depend in the size of the system. This is a big advantage compared to the explicit solver, for which smaller values of $r$ enforces smaller choices of $\tau$ which increase the number of iterations required substantially.

\section{Numerical experiments}\label{sec:exp}

To perform numerical experiments we need two ingredients. 
\begin{enumerate}
\item\label{numtest-item1} The explicit value of the constant $D_{d,p}$.
\item\label{numtest-item2} Explicit solutions of \eqref{eq:BVP}-\eqref{eq:BC} to test with.
\end{enumerate}
 It is standard to check
that, in dimension $d=1$, we have
\[
D_{1,p}=\frac{1}{2(1+p)}.
\]
In dimension $d=2$ the constant is not so explicit in general, but we have the following result allowing us to compute it for integer numbers, which partially answers \eqref{numtest-item1}.
\begin{lemma}
Let $p\in[2,\infty)$ and $d=2$. 
\begin{enumerate}[\rm (a)]
\item (Even) If $p=2n$ for some $n\in \N$ then 
\[
D_{2,p}= \frac{1}{2+p}\left(\prod_{i=1}^n \frac{2i-1}{2i}\right).
\]
\item (Odd) If $p=2n+1$ for some $n\in \N$ then 
\[
D_{2,p}= \frac{2}{\pi(2+p)}\left(\prod_{i=1}^n \frac{2i}{2i+1}\right).
\]
\end{enumerate} 
\end{lemma}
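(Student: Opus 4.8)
The plan is to reduce the computation of $D_{2,p}$ to a one-dimensional Wallis-type integral and then evaluate it via the standard recursion. First I would recall the definition $D_{d,p}=\frac{d}{2(d+p)}\fint_{\partial B_1}|y_1|^p\dd\sigma(y)$ and specialize to $d=2$, so that $D_{2,p}=\frac{1}{2+p}\fint_{\partial B_1}|y_1|^p\dd\sigma(y)$. Parametrizing the unit circle by $y=(\cos\theta,\sin\theta)$ and using that the normalized arc-length measure is $\frac{1}{2\pi}\dd\theta$, this becomes $D_{2,p}=\frac{1}{2+p}\cdot\frac{1}{2\pi}\int_0^{2\pi}|\cos\theta|^p\dd\theta$. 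By the symmetry of $|\cos\theta|$ over the four quadrants, $\int_0^{2\pi}|\cos\theta|^p\dd\theta=4\int_0^{\pi/2}\cos^p\theta\dd\theta$, so everything reduces to the Wallis integral $W_p:=\int_0^{\pi/2}\cos^p\theta\dd\theta$, giving the clean relation $D_{2,p}=\frac{2}{\pi(2+p)}\,W_p$.

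Next I would establish the classical reduction formula $W_m=\frac{m-1}{m}W_{m-2}$ for $m\geq 2$ by a single integration by parts (writing $\cos^m\theta=\cos^{m-1}\theta\cdot\cos\theta$ and differentiating the factor $\cos^{m-1}\theta$), together with the base cases $W_0=\frac{\pi}{2}$ and $W_1=1$. Iterating the recursion down to the appropriate base case yields, for the even exponent $p=2n$, the value $W_{2n}=\frac{\pi}{2}\prod_{i=1}^n\frac{2i-1}{2i}$, and for the odd exponent $p=2n+1$, the value $W_{2n+1}=\prod_{i=1}^n\frac{2i}{2i+1}$. Substituting these into $D_{2,p}=\frac{2}{\pi(2+p)}W_p$ gives exactly the two stated formulas: in the even case the factor $\frac{\pi}{2}$ cancels the $\pi$, leaving $\frac{1}{2+p}\prod_{i=1}^n\frac{2i-1}{2i}$, and in the odd case one obtains $\frac{2}{\pi(2+p)}\prod_{i=1}^n\frac{2i}{2i+1}$ directly.

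There is essentially no real obstacle here: the only genuine content is the Wallis recursion, which is elementary. The one point requiring a little care is the bookkeeping of the index ranges in the telescoping product, so that the double factorials match the stated products $\prod_{i=1}^n\frac{2i-1}{2i}$ and $\prod_{i=1}^n\frac{2i}{2i+1}$ precisely, and checking that the normalization of the average $\fint$ over $\partial B_1$ (division by the total length $2\pi$) is carried through consistently. Before concluding I would verify the formulas against the small cases: $p=2$ (i.e.\ $n=1$) gives $D_{2,2}=\frac{1}{4}\cdot\frac{1}{2}=\frac{1}{8}$, and $p=3$ (i.e.\ $n=1$) gives $D_{2,3}=\frac{2}{5\pi}\cdot\frac{2}{3}=\frac{4}{15\pi}$, both of which agree with a direct evaluation of the corresponding $W_p$.
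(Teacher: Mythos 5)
Your proposal is correct and follows essentially the same route as the paper: both reduce $D_{2,p}$ to a Wallis-type integral of $\cos^p\theta$ via the circle parametrization and then apply the standard integration-by-parts recursion $W_m=\frac{m-1}{m}W_{m-2}$ with base cases $W_0=\frac{\pi}{2}$, $W_1=1$ (the paper works with $I_p=\int_{-\pi/2}^{\pi/2}\cos^p\theta\,\dd\theta=2W_p$, a purely cosmetic difference). Your bookkeeping of the products and the sanity checks at $p=2,3$ are accurate.
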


\begin{proof}
In dimension $d=2$ we have
\[
D_{2,p}=\frac{1}{2+p} \int_{\partial B_1} |y_1|^p \dd \sigma(y)= \frac{1}{\pi (2+p)}\int_{-\frac{\pi}{2}}^{\frac{\pi}{2}} (\cos(\theta))^p \dd \theta.
\]
Now we note that for $p\geq2$ a simple integration by parts yields 
\[
\begin{split}
I_p:=&\int_{-\frac{\pi}{2}}^{\frac{\pi}{2}} (\cos(\theta))^p \dd \theta = \int_{-\frac{\pi}{2}}^{\frac{\pi}{2}} (\cos(\theta))^{p-1} \cos(\theta) \dd \theta= (p-1)\int_{-\frac{\pi}{2}}^{\frac{\pi}{2}} (\cos(\theta))^{p-2} (\sin(\theta))^2 \dd \theta\\
=& (p-1) I_{p-2} - (p-1) I_p.
\end{split} 
\]
So we have the recurrence relation $I_p=\frac{p-1}{p}I_{p-2}$. We only need to compute 
$$I_0=\int_{-\frac{\pi}{2}}^{\frac{\pi}{2}} \dd \theta=\pi\quad \text{and}\quad  I_1=\int_{-\frac{\pi}{2}}^{\frac{\pi}{2}} \cos(\theta) \dd \theta=2.$$
This finishes the proof. 
\end{proof}

 As mentioned in the introduction, homogeneous problems can successfully be treated by means of the so-called normalized $p$-Laplacian, for which numerical schemes are well understood (see \cite{Obe05, ObermanpLap}). Therefore, we will focus on nonhomogeneous problems ($f\not=0$). We compare our numerically obtained solution with the explicit solution
\[
u(x)= (1-|x|^{\frac{p}{p-1}}) \frac{p-1}{p}\frac{1}{d^{\frac{1}{p-1}}} .
\]
Note that $u$ is a solution of 
\begin{equation}\label{eq:explicitsol}
\begin{cases}
-\lap_p u(x) =1, & x\in B_1,\\
u(x)=0, & x\in \partial B_1.
\end{cases}
\end{equation}

\subsection{Error analysis in dimension $d=1$}\label{sec:Numdim1} Here we present the results of a numerical experiment using our numerical scheme to solve problem \eqref{eq:explicitsol} in dimension $d=1$ using MATLAB. 

To solve the nonlinear system present in \eqref{eq:NumDisc1}-\eqref{eq:NumDisc2} we use the explicit solver given by \eqref{eq:expscheps}. The parameter $\tau_m$ has been chosen to satisfy the \eqref{eq:CFL}, while $\veps$ is chosen small enough to not interfere with the error in $h$ and $r$. We have also taken $G(x)=0$ for all $x\in\partial \Omega_r$ as extended boundary condition.

We have  stopped the explicit solver when it has reached a numerical steady state, i.e., 
\[
\max_{x_\alpha\in \Omega}|(U_\veps)^{m+1}_\alpha-(U_\veps)^{m}_\alpha|<10^{-16}.
\]
In this case we have chosen to take $h=r^2/4$ which clearly satisfy the condition $h=o(r^{3/2})$. The results obtained are presented in Figure \ref{fig:Dim1error} and Table \ref{table:Dim1error} which contain the simulations for $p=3$, $p=4$ and $p=10$.

\begin{figure}[h!]
\includegraphics[width=\textwidth]{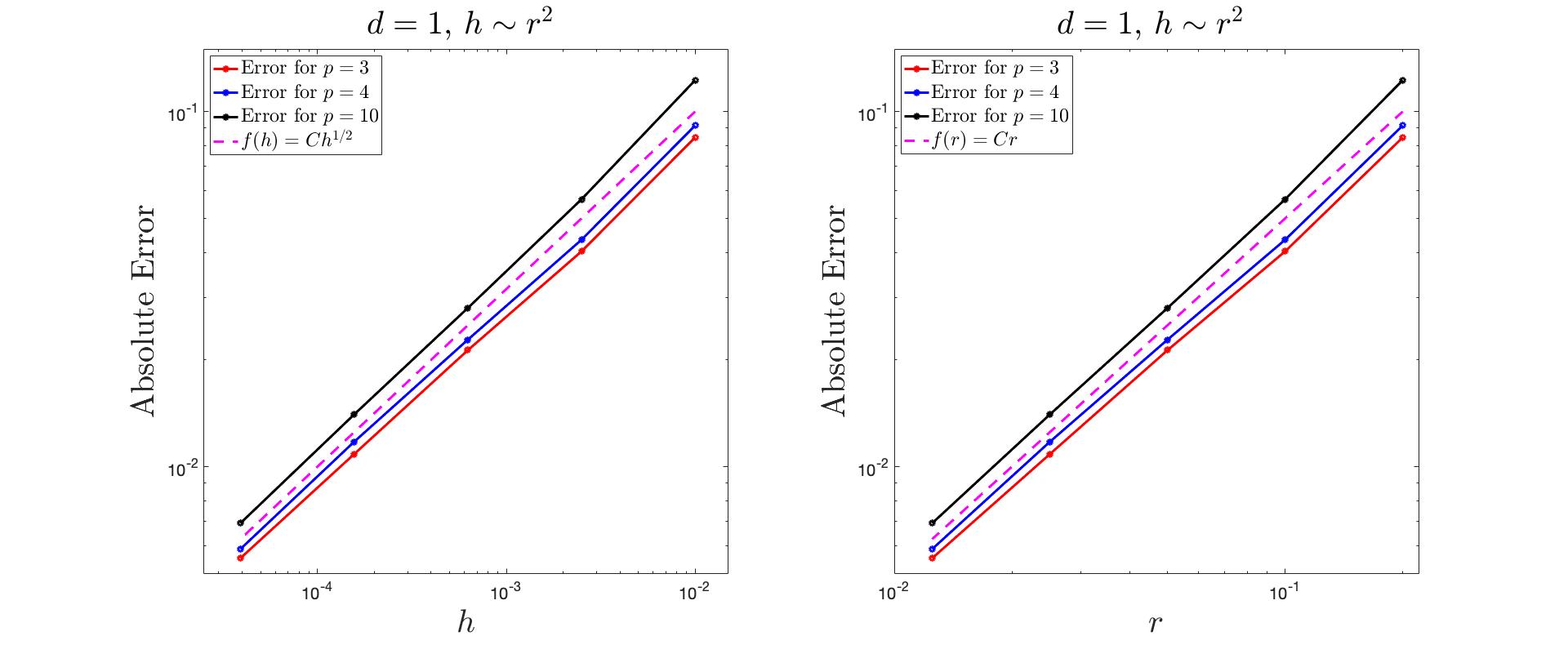}
\caption{$\ell^\infty$-absolute error $\|(U_\veps)_h-u\|_{\ell^\infty}$ and an approximated convergence rate in $r$ and $h$ (in rose) in dimension $d=1$ for problem \eqref{eq:explicitsol}}
\label{fig:Dim1error}
\end{figure}

It can be clearly seen that the error seems to behave linearly with $r$. This can be seen more clearly in Table \ref{table:Dim1error}, where we present the details of the results in Figure \ref{fig:Dim1error}.

\begin{table}[h!]
\centering
\begin{tabular}{l|l|ll|ll|ll|l}
&& \quad  $p=3  $                  &  &     \quad $p=4$                  &  &      \quad  $p=10$               &                        \\ \cline{3-8} 
 \quad $r$ &\   $h=\frac{r^2}{4}$& \multicolumn{1}{l|}{error} & $\gamma$  & \multicolumn{1}{l|}{error} & $\gamma$  & \multicolumn{1}{l|}{error} & $\gamma$   \\ \hline
$2.00$e-$1$&$1.000$e-$2$& \multicolumn{1}{l|}{$8.46$e-$2$} &  & \multicolumn{1}{l|}{$ 9.13$e-$2 $} &  & \multicolumn{1}{l|}{$ 1.23$e-$1 $} &                \\
$1.00$e-$1$&$2.500$e-$3$& \multicolumn{1}{l|}{$4.03 $e-$2 $} &1.07  & \multicolumn{1}{l|}{$4.35 $e-$2 $} &1.07  & \multicolumn{1}{l|}{$ 5.66 $e-$2 $} &1.12                      \\
$5.00$e-$2$&$6.250$e-$4$& \multicolumn{1}{l|}{$2.13 $e-$2 $} &0.92 & \multicolumn{1}{l|}{$2.27 $e-$2 $} &0.94 & \multicolumn{1}{l|}{$ 2.80 $e-$2 $} &1.02                 \\
$2.50$e-$2$&$1.563$e-$4$& \multicolumn{1}{l|}{$1.08$e-$2 $} &0.97  & \multicolumn{1}{l|}{$1.17 $e-$2 $} &0.96  & \multicolumn{1}{l|}{$2.40$e-$2 $}&1.00                 \\
1.25e-$2$&$3.906$e-$5$& \multicolumn{1}{l|}{$5.52$e-$3 $} &0.97  & \multicolumn{1}{l|}{$5.86 $e-$3 $} &1.00  & \multicolumn{1}{l|}{$6.93$e-$3 $} & 1.02                           
\end{tabular}
\vspace{4mm}
\caption{$\ell^\infty$-absolute error $\|(U_\veps)_h-u\|_{\ell^\infty}$ and observed convergence rate $\gamma$ in $r$ in dimension $d=1$ for problem \eqref{eq:explicitsol}.}
\label{table:Dim1error}
\end{table}
The observed convergence rate $\gamma$ have been computed in to be such that
\[
\textup{error}_j= k (r_j)^\gamma, \quad j=0,1,2,3,4.
\]
where $r_j=0.2/2^j$. In this way,
\[
\gamma= \log_2\left(\frac{\textup{error}_{j-1}}{\textup{error}_{j}}\right).
\]

\subsection{Error analysis in dimension $d=2$}\label{sec:Numdim2} We now perform numerical experiments in dimension $d=2$. We have   almost the same setup as in Section \ref{sec:Numdim1}, except that we now take $h=r^{\frac{3}{2}+0.1}$ which clearly satisfy the condition $h=o(r^{3/2})$.

\begin{figure}[h!]
\includegraphics[width=\textwidth]{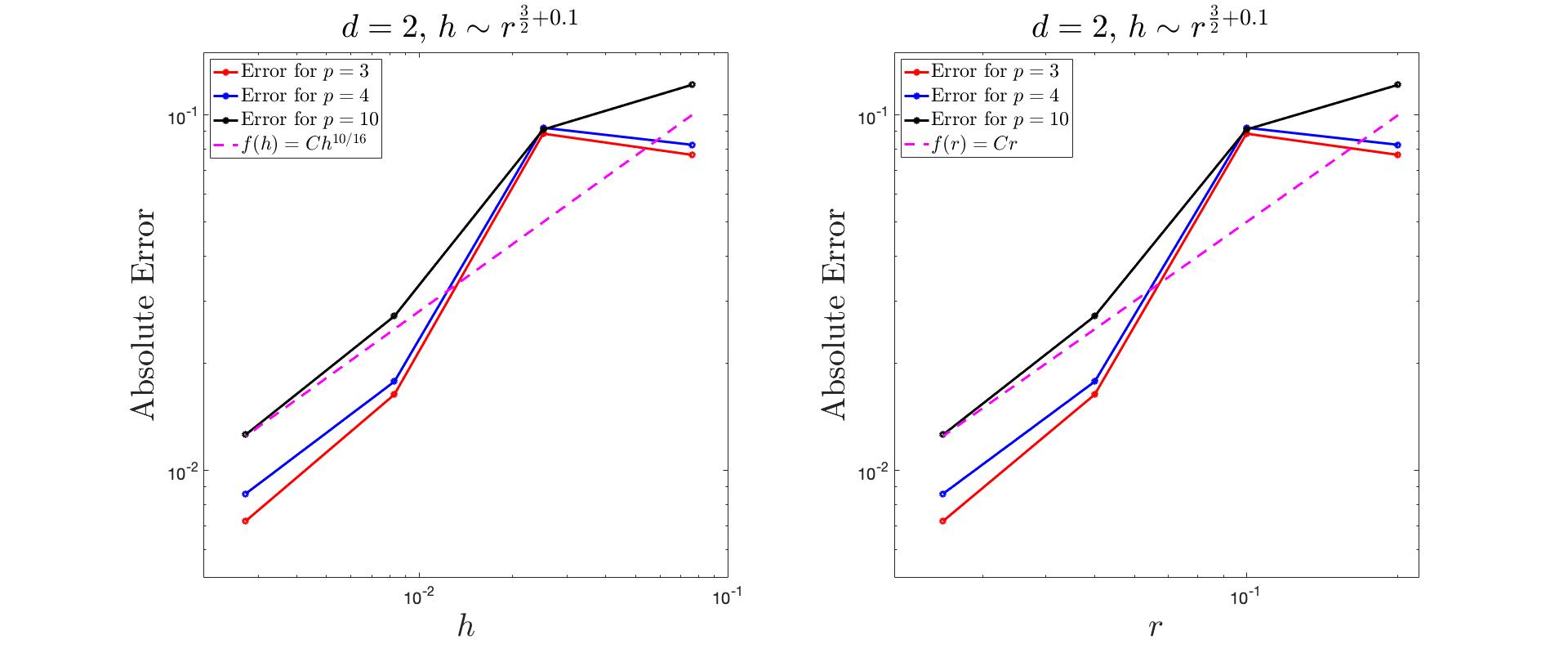}
\caption{$\ell^\infty$-absolute error $\|(U_\veps)_h-u\|_{\ell^\infty}$ and an approximated convergence rate in $r$ and $h$ (in rose) in dimension $d=2$ for problem \eqref{eq:explicitsol}}
\label{fig:Dim2error}
\end{figure}

Again, as in the computation in dimension $d=1$, the error observed in Figure \ref{fig:Dim2error} seems to decay at least linearly with $r$,  despite the fact that we have taken the parameter $h$ to  decay slower than before. It seems as if as long as $h=o(r^{3/2})$, the choice of $h$ does not interfere with the order of convergence in $r$.

\begin{table}[h!]
\centering
\begin{tabular}{l|l|ll|ll|ll|l}
 &   & \quad  $p=3  $                  &  &     \quad $p=4$                  &  &      \quad  $p=10$               &                        \\ \cline{3-8} 
\quad $r$ &\ $h=r^{\frac{3}{2}+0.1}$& \multicolumn{1}{l|}{error} & $\gamma$  & \multicolumn{1}{l|}{error} & $\gamma$  & \multicolumn{1}{l|}{error} & $\gamma$   \\ \hline
$2.00$e-$1$&$7.615$e-$2$& \multicolumn{1}{l|}{$7.73$e-$2$} &  & \multicolumn{1}{l|}{$ 8.25$e-$2 $} &  & \multicolumn{1}{l|}{$ 1.22$e-$1 $} &                \\
$1.00$e-$1$&$2.512$e-$2$& \multicolumn{1}{l|}{$8.87 $e-$2 $} &-0.20  & \multicolumn{1}{l|}{$9.21 $e-$2 $} &-0.15  & \multicolumn{1}{l|}{$ 9.11 $e-$2 $} &0.41                      \\
$5.00$e-$2$&$8.286$e-$3$& \multicolumn{1}{l|}{$1.64 $e-$2 $} &2.44 & \multicolumn{1}{l|}{$1.78 $e-$2 $} &2.37 & \multicolumn{1}{l|}{$ 2.72 $e-$2 $} &1.74                 \\
$2.50$e-$2$&$2.733$e-$3$& \multicolumn{1}{l|}{$7.21$e-$3 $} &1.18  & \multicolumn{1}{l|}{$8.61 $e-$3 $} &1.05  & \multicolumn{1}{l|}{$1.26$e-$2 $}&1.11                                         
\end{tabular}
\vspace{4mm}
\caption{$l^\infty$-absolute error $\|(U_\veps)_h-u\|_{\ell^\infty}$ and observed convergence rate $\gamma$ in $r$ in dimension $d=2$ for problem \eqref{eq:explicitsol}.}
\label{table:Dim2error}
\end{table}

In Table \ref{table:Dim2error} we observe some instabilities in the order of convergence in the simulations for big choices of $r$ and $h$. However, if we compute the order of convergence between the simulation with $r=2.00$e-$1$ and $r=2.50$e-$2$ the observed rate is
\[
\begin{split}
\log_8\left(\frac{7.73\textup{e-}2}{7.21\textup{e-}3}\right)&= 1.14>1 \quad \textup{if} \quad p=3.
\\
\log_8\left(\frac{8.25\textup{e-}2}{8.61\textup{e-}3}\right)&= 1.09>1 \quad \textup{if} \quad p=4\\
\log_8\left(\frac{1.22\textup{e-}1}{1.26\textup{e-}2}\right)&= 1.09>1 \quad \textup{if} \quad p=10
\end{split}
\]
which is actually slightly better than linear in all the cases. 
 
\subsection{Improvement of the error with an adapted boundary condition} During the simulations presented in Section \ref{sec:Numdim1} and Section \ref{sec:Numdim2}, we observed that the extension of $G\equiv0$ produced a certain instability in the solution close to the boundary. Due to this fact, the maximal error  is attained near the boundary.

\begin{figure}[h!]
\includegraphics[width=\textwidth]{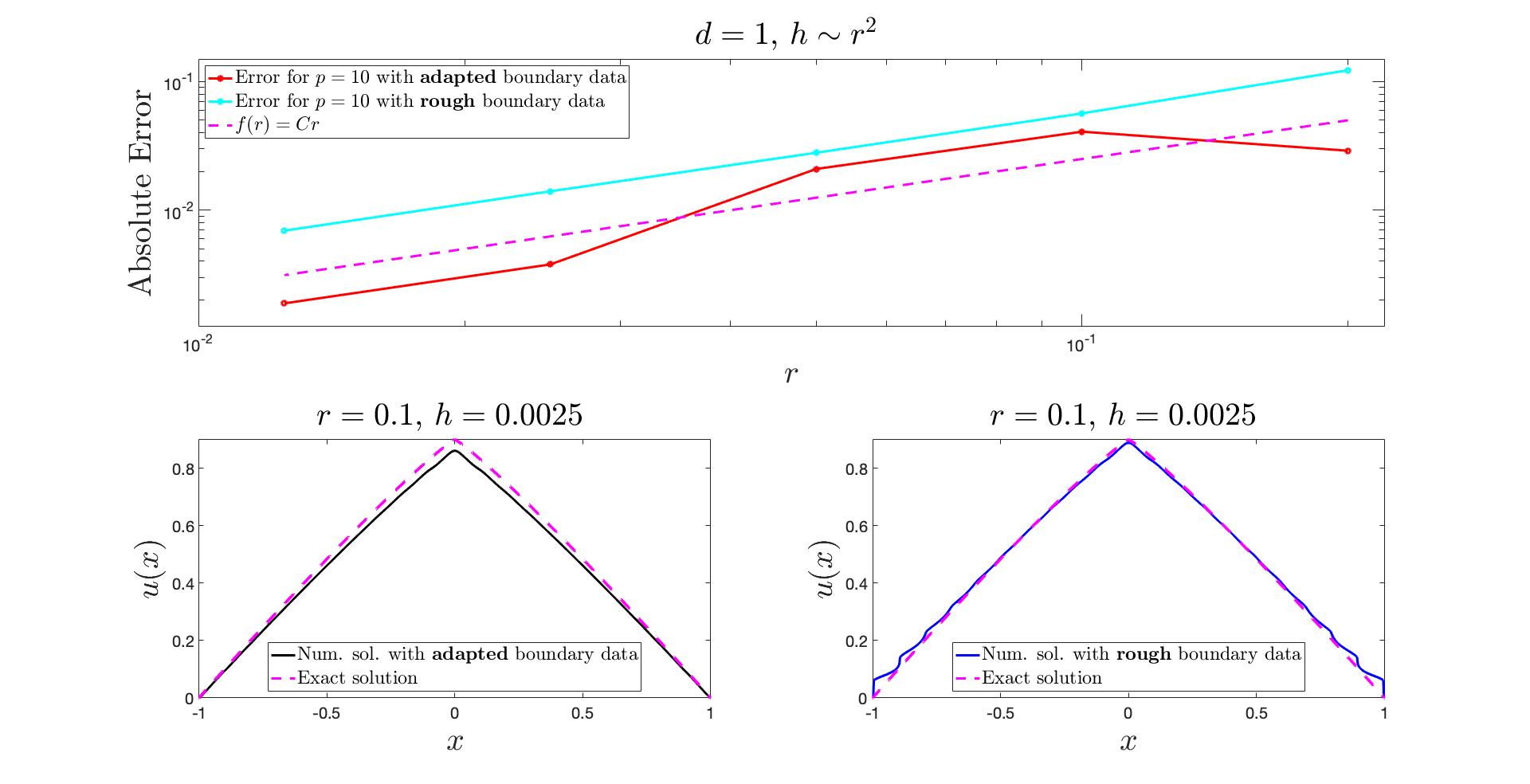}
\caption{Dimension $d=1$ and $p=10$. \emph{Top figure:} Error analysis for the adapted and rough boundary extension ($G\equiv0$) and the adapted boundary extension ($G$ given by \eqref{eq:Gadapted}). \emph{Bottom figures:} Representation of the numerical and real solutions for the two boundary extensions.
}
\label{fig:Dim1errorAdaptedVSNonadapted}
\end{figure}

In order to  avoid this  phenomenon, we have adapted the boundary condition to make the transition between the interior and the boundary smoother.  We have taken
\begin{equation}\label{eq:Gadapted}
G(x)=\frac{p-1}{p} (1-|x|^{\frac{p}{p-1}}) \quad \textup{for} \quad x\in \partial \Omega_r.
\end{equation}

In the results presented in Figure \ref{fig:Dim1errorAdaptedVSNonadapted}, we clearly see that the maximum error of the solution with an adapted condition comes from the middle point, which is the point where solution is the least regular, while without adaption, the error comes from the instabilities created near the boundary. 

Thus,  the correction seems to give a smoother transition between the interior and the extended condition. It also seems to improve the error estimate (but not the order of convergence).

\subsection{Solution of a fully nonhomogeneous problem.}\label{sec:fullynon} Finally, we present some numerical simulations of a problem with nonhomogeneous right hand side  and nonhomogeneous boundary conditions. We present the numerical solutions corresponding to problem \eqref{eq:BVP}-\eqref{eq:BC} in dimension $d=2$, posed in $\Omega=B_1(0)$ with $f\equiv $ constant in $\Omega$ and $g(x,y)=\frac{1}{2}+xy$ on $\partial \Omega$. 

The boundary condition has been extended to $\partial \Omega_r$ by $G(x,y)=\frac{1}{2}+xy$ and we have chosen the numerical parameters $r=0.2$ and $h=r^2=0.04$. 

 In Figure \ref{fig:Dim2fullyNonHomo}, we present a level set representation of the solutions for $p=1.1$, $p=1.5$, $p=2$, $p=4$ and $p=20$ (using the regularization described at the end of Section \ref{sec:NonlinearSis} when $p<2$). Here, $h=r^{2}$ has been used, also when $p<2$ (see Remark \ref{rem:conj}).

\begin{figure}[h!]
\includegraphics[width=\textwidth]{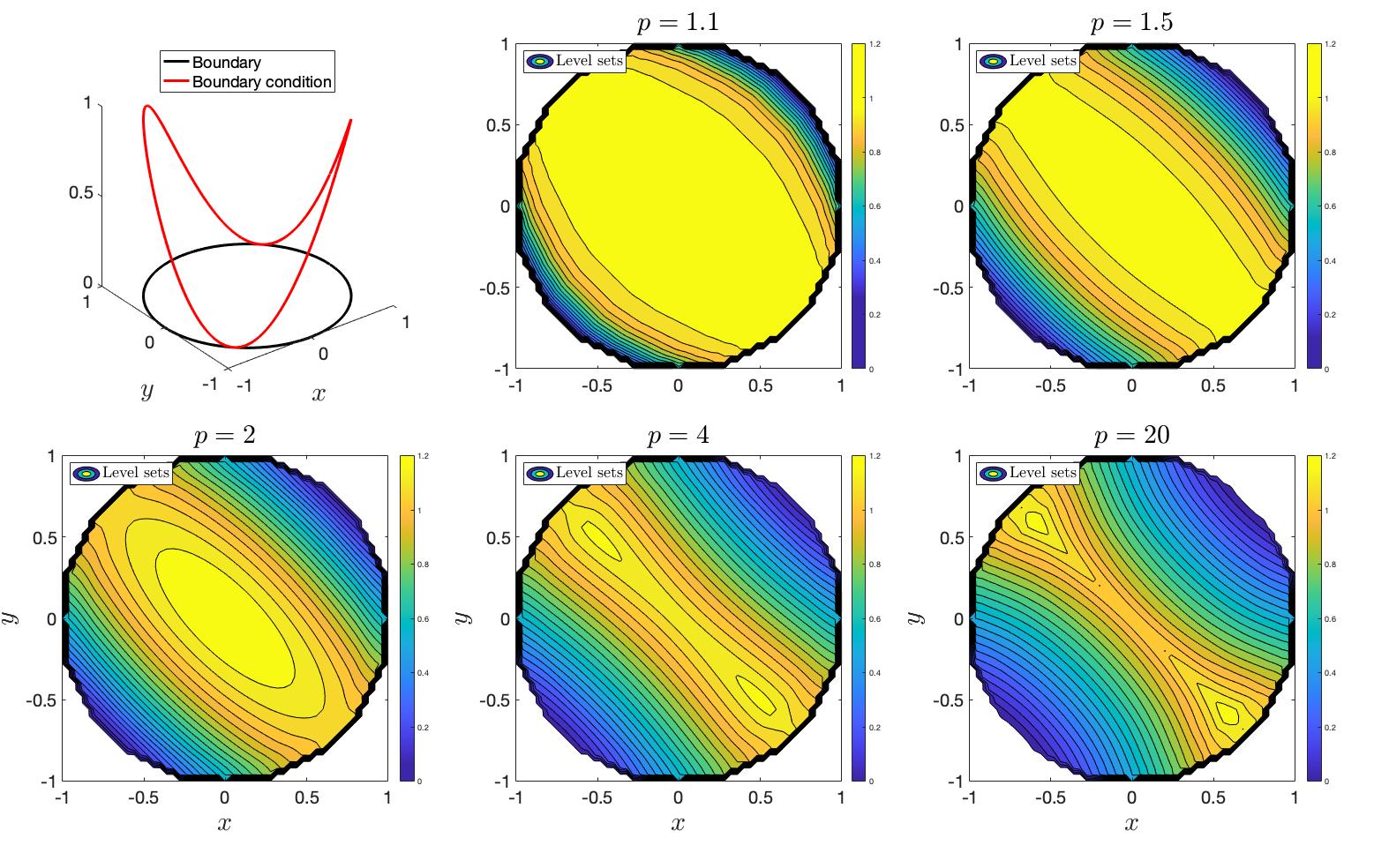}
\caption{Dimension $d=2$. Numerical solution of the fully nonhomogeneous problem \eqref{eq:BVP}-\eqref{eq:BC} with $g(x,y)=\frac{1}{2}+xy$ and $f(x,y)\equiv$ constant.}
\label{fig:Dim2fullyNonHomo}
\end{figure}

\section*{Acknowledgements}
F. del Teso was partially supported by PGC2018-094522-B-I00 from the MICINN of the Spanish Government. E. Lindgren is supported by the Swedish Research Council, grant no. 2017-03736.

\section{Appendix} 
\begin{lemma}{\label{lem:pineq1}}
Let $p\geq 2$. Then 
$$
\Big||a+b|^{p-2}(a+b)-|a|^{p-2}a\Big|\leq C\max( |a|,|a+b|)^{p-2}|b|, 
$$
where $C=C(p)$.
\end{lemma}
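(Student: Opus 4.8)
The plan is to recognize the left-hand side as $\big|J_p(a+b)-J_p(a)\big|$ in the notation of the paper, and to exploit the fact that for $p\geq 2$ the function $J_p(t)=|t|^{p-2}t$ is continuously differentiable on all of $\R$, with $J_p'(s)=(p-1)|s|^{p-2}$. The derivative extends continuously to $s=0$ precisely because $p-2\geq 0$ (when $p=2$ the function is simply the identity), so no splitting around the origin is needed.

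First I would apply the fundamental theorem of calculus along the segment joining $a$ and $a+b$:
\[
J_p(a+b)-J_p(a)=\int_a^{a+b}J_p'(s)\dd s=(p-1)\int_a^{a+b}|s|^{p-2}\dd s.
\]
Taking absolute values and estimating the integral by its length times the supremum of the integrand over the segment between $a$ and $a+b$ gives
\[
\big|J_p(a+b)-J_p(a)\big|\leq (p-1)\,|b|\,\sup_{s}|s|^{p-2}.
\]

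Next I would identify this supremum explicitly. Since $p-2\geq 0$, the map $t\mapsto t^{p-2}$ is nondecreasing on $[0,\infty)$, so $\sup_{s}|s|^{p-2}=\big(\sup_{s}|s|\big)^{p-2}$; and since $s\mapsto |s|$ is convex, its maximum over the closed segment is attained at one of the endpoints, i.e. $\sup_{s}|s|=\max(|a|,|a+b|)$. Combining these two observations yields the claimed bound with the explicit constant $C=p-1$.

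The computation is essentially routine; the only point requiring any care is the behaviour at the origin, where $J_p$ could a priori fail to be smooth. This is exactly where the hypothesis $p\geq 2$ enters: it guarantees $J_p\in C^1(\R)$, so that the fundamental theorem of calculus applies directly, in contrast to the range $1<p<2$ where only the Hölder estimates used in Step~2 of the proof of Theorem~\ref{thm:MVF} are available. I do not expect any genuine obstacle beyond recording this observation.
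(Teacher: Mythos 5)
Your proof is correct and is essentially the paper's own argument: the paper proves the lemma from the identity $|a+b|^{p-2}(a+b)-|a|^{p-2}a=(p-1)\int_0^b |a+s|^{p-2}\dd s$, which is exactly your fundamental-theorem-of-calculus step after the substitution $s\mapsto a+s$, with the endpoint bound on the integrand left implicit. You merely spell out the detail that the supremum of $|s|^{p-2}$ over the segment is $\max(|a|,|a+b|)^{p-2}$, yielding the explicit constant $C=p-1$.
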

\begin{proof}
It follows from the fact that 
$$
|a+b|^{p-2}(a+b)-|a|^{p-2}a=(p-1)\int_0^b |a+s|^{p-2} \dd s.\qedhere
$$
\end{proof}

The following inequality is Lemma 3.4 in \cite{KKL}.
\begin{lemma}{\label{lem:pineq3}} Let $p\in (1,2)$. Then 
$$
\Big||a+b|^{p-2}(a+b)-|a|^{p-2}a\Big|\leq C\left(|a|+|b|\right)^{p-2}|b|.
$$
Here $C=C(p)$.
\end{lemma}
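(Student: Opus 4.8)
The plan is to prove the inequality by a homogeneity-and-compactness argument, exploiting that both sides scale identically under $(a,b)\mapsto(\lambda a,\lambda b)$, $\lambda>0$. First I would record two elementary facts about $J_p(t)=|t|^{p-2}t$ for $p\in(1,2)$: it is continuous on $\R$ with $J_p(0)=0$ (since $|J_p(t)|=|t|^{p-1}\to0$ as $t\to0$, because $p-1>0$), and it is $C^1$ away from the origin with $J_p'(t)=(p-1)|t|^{p-2}$. I would also note the reflection identity $J_p(-t)=-J_p(t)$, which makes both sides of the claimed bound invariant under $(a,b)\mapsto(-a,-b)$. The case $b=0$ is trivial (both sides vanish), so I may assume $b\neq0$ throughout.

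Next, consider the quantity
\[
F(a,b):=\frac{\big|J_p(a+b)-J_p(a)\big|}{|b|}\,\big(|a|+|b|\big)^{2-p},\qquad b\neq0,
\]
which is positively homogeneous of degree $0$: replacing $(a,b)$ by $(\lambda a,\lambda b)$ with $\lambda>0$ multiplies the numerator difference by $\lambda^{p-1}$, the factor $|b|$ by $\lambda$, and $(|a|+|b|)^{2-p}$ by $\lambda^{2-p}$, so the three powers cancel. Proving the lemma is therefore equivalent to showing that $F$ is bounded on the compact $\ell^1$-circle $\Sigma:=\{(a,b):|a|+|b|=1\}$, the optimal constant being $C=C(p)=\sup_\Sigma F$.

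The heart of the matter is to verify that $F$ extends to a continuous function on all of $\Sigma$; compactness then yields the bound immediately. On $\Sigma\setminus\{(\pm1,0)\}$ continuity is clear, because $J_p$ is continuous everywhere and the denominator $|b|$ is bounded away from $0$; in particular the seemingly delicate points $a=0$ (where $|b|=1$) and $a+b=0$ (where $|a|=|b|=\tfrac12$) cause no trouble, since it is $J_p$ itself, not its derivative, that enters the numerator. The only genuine issue is the behavior as $(a,b)\to(\pm1,0)$ along $\Sigma$, i.e. as $b\to0$ with $|a|\to1$. There I would use differentiability of $J_p$ off the origin to compute
\[
\lim_{b\to0}\frac{J_p(a+b)-J_p(a)}{b}=J_p'(a)=(p-1)|a|^{p-2},
\]
so that $F(a,b)\to(p-1)|a|^{p-2}\,|a|^{2-p}=p-1$ as $|a|\to1$, independently of the sign of $b$. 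Hence $F$ admits the continuous extension $F(\pm1,0)=p-1$, is continuous on the compact set $\Sigma$, and is therefore bounded, which proves the lemma.

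I expect the main obstacle to be precisely the singularity of $J_p'$ at the origin, which is exactly what separates this range $p\in(1,2)$ from Lemma~\ref{lem:pineq1}: the crude pointwise estimate coming from $J_p(a+b)-J_p(a)=(p-1)\int_a^{a+b}|s|^{p-2}\dd s$ blows up when the interval $[a,a+b]$ hugs the origin, and moreover points the bound in the wrong direction relative to the desired weight $(|a|+|b|)^{p-2}$. The compactness argument sidesteps this by never differentiating at $0$; the limiting value $p-1$ shows that the apparent singularity is removable once one divides by $|b|$ and weights by $(|a|+|b|)^{2-p}$. If an explicit constant were required, I would instead feed the integral representation through the three sign-cases $a\ge0$, $a+b\le0$, and $a<0<a+b$, using in the first two the concavity of $b\mapsto1-(1-b)^{p-1}$ (which gives the clean bound $(p-1)b$) together with a comparison of $(|a|+|b|)^{p-2}$ against its endpoint values, yielding a constant of the form $C=\max\{p-1,\,2^{3-p}\}$.
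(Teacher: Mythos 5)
Your proof is correct, but it takes a genuinely different route from the paper, which in fact offers no argument of its own: the lemma is simply quoted as Lemma 3.4 of \cite{KKL}, where it is obtained by a direct computation. Your homogeneity-and-compactness argument is self-contained and sound: both sides scale like $\lambda^{p-1}$, so the quotient $F(a,b)=\frac{|J_p(a+b)-J_p(a)|}{|b|}\,(|a|+|b|)^{2-p}$ is $0$-homogeneous, and bounding it on the compact set $\Sigma=\{|a|+|b|=1\}$ suffices; the apparently dangerous points $a=0$ and $a+b=0$ really are harmless since only $J_p$ itself (H\"older, not differentiable) enters the numerator, and the punctures $(\pm1,0)$ are removable with limiting value $p-1$. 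Two small points deserve tightening. First, at $(\pm1,0)$ you compute the limit with $a$ frozen, but along $\Sigma$ the base point $a=\pm(1-|b|)$ moves with $b$; the joint limit follows from the mean value theorem, $\bigl(J_p(a+b)-J_p(a)\bigr)/b=J_p'(\xi)$ with $\xi$ between $a$ and $a+b$, together with continuity of $J_p'$ in a neighborhood of $\pm1$ (away from the origin), so the continuous extension is legitimate. Second, your remark that $|b|$ is ``bounded away from $0$'' on $\Sigma\setminus\{(\pm1,0)\}$ is only true locally, which is all that continuity at each such point requires, but the phrasing should say so. The trade-off between the two routes is the usual one: your compactness argument is elementary and avoids any case analysis, at the price of a non-explicit constant (your closing sketch via $J_p(a+b)-J_p(a)=(p-1)\int_a^{a+b}|s|^{p-2}\dd s$ indicates how to recover one, in the spirit of the computation in \cite{KKL}), whereas the cited lemma delivers a quantitative $C(p)$ directly. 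Since the statement only asserts the existence of some $C=C(p)$, your proof fully establishes it.
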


We also need the following lemma in the proof of convergence.

\begin{lemma}\label{lem:xbeta} Assume $p\in (1,2)$, \eqref{eq:ass} and let $\phi(x)=|x|^\beta$ with $\beta>p/(p-1)$. Then 
\[
\lim_{r\to 0,x\to 0}
\MVPrh\phi(x)=0. 
\]
\end{lemma}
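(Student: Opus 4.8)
The plan is to exploit the exact homogeneity of $\phi(x)=|x|^\beta$ together with the consistency already established in Theorem \ref{thm:MVF}. Set $\alpha_0:=\beta(p-1)-p$; the hypothesis $\beta>p/(p-1)$ is exactly the statement $\alpha_0>0$, and the algebraic identity $\alpha_0=(\beta-1)(p-1)-1$ makes $\plap|x|^\beta=C_{d,p,\beta}|x|^{\alpha_0}$ with an explicit constant. Substituting $x=r\xi$, $y_\alpha=rz_\alpha$ in \eqref{eq:discPlap} and using $|x+y_\alpha|^\beta-|x|^\beta=r^\beta(|\xi+z_\alpha|^\beta-|\xi|^\beta)$ gives the scaling relation
\[
\MVPrh\phi(x)=r^{\alpha_0}\,\frac{\tilde h^d}{D_{d,p}\,\omega_d}\sum_{z_\alpha\in B_1\cap\tilde h\Z^d}J_p(|\xi+z_\alpha|^\beta-|\xi|^\beta),\qquad \xi=\tfrac{x}{r},\ \ \tilde h=\tfrac{h}{r},
\]
where $\tilde h\to0$ as $r\to0$ since $h=o(r)$. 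I would then study the joint limit $(r,x)\to(0,0)$ by distinguishing, along an arbitrary sequence, the regime where $\xi=x/r$ stays bounded from the regime $|\xi|\to\infty$.

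In the bounded regime $|\xi|\le M$, it suffices to bound the rescaled sum crudely: since $|\,|\xi+z_\alpha|^\beta-|\xi|^\beta\,|\le(M+1)^\beta$ and $|J_p(t)|=|t|^{p-1}$, while $\tilde h^d\#\{z_\alpha\in B_1\}\le|B_{1+\sqrt d\,\tilde h}|$ is bounded for $\tilde h\le1$, the sum is $O_M(1)$. Hence $|\MVPrh\phi(x)|\le C(M)\,r^{\alpha_0}\to0$. Note this uses only $\alpha_0>0$ and not the singular structure of $J_p$ near the origin, which is precisely why the crude estimate is admissible here.

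The substantive case is $|\xi|\to\infty$, i.e.\ $r\ll|x|\to0$, where the first-order term does not vanish pointwise and genuine cancellation is needed. Here I would instead rescale about $x$: writing $\psi(w):=|\widehat x+w|^\beta$ with $\widehat x=x/|x|$, one has $\phi(x+|x|w)=|x|^\beta\psi(w)$, and the same computation shows that $\MVPrh\phi(x)=|x|^{\alpha_0}$ times the discretization \eqref{eq:discPlap} of $\psi$ at $0$ taken with radius $\rho:=r/|x|$ and grid size $\eta:=h/|x|$. Since $|x|\gg r$ we have $\rho\to0$, and $\eta/\rho^{p/(p-1)}=h\,|x|^{1/(p-1)}/r^{p/(p-1)}=o(|x|^{1/(p-1)})\to0$, so that \eqref{eq:ass} holds for the rescaled pair $(\rho,\eta)$; this is the step where \eqref{eq:ass} is used in an essential way. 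The functions $\psi$ are smooth with $C^2$-norms bounded uniformly in $\widehat x$ on $\overline{B_{1/2}}$ (because $|\widehat x+w|\ge1/2$ there), so the consistency estimate of Theorem \ref{thm:MVF} applies uniformly over this family and shows the bracketed quantity equals $\plap\psi(0)+o(1)$. By homogeneity $\plap\psi(0)=\plap|\cdot|^\beta(\widehat x)=C_{d,p,\beta}$, a fixed constant, so the bracket stays bounded and $|\MVPrh\phi(x)|\le C|x|^{\alpha_0}+o(|x|^{\alpha_0})\to0$.

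Combining the two regimes finishes the proof: every subsequence of a given $(r_n,x_n)\to(0,0)$ admits a further subsequence on which $\xi_n$ is either bounded or tends to $\infty$ (one-point compactification), and on both $\MVPrh\phi(x_n)\to0$, so the full sequence converges to $0$. The main obstacle is the large-$\xi$ regime: I must verify that rescaling about $x$ maps \eqref{eq:ass} into itself — which is exactly the content of the rate prescribed by \eqref{eq:ass} for $p\in(1,2)$ — and that the consistency of Theorem \ref{thm:MVF} (whose $p<2$ proof rests on the Hölder estimate of Lemma \ref{lem:pineq3}) is \emph{uniform} over the compact family $\{|\widehat x+\cdot|^\beta\}$. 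Both points rely on the uniform smoothness of $\psi$ away from $w=-\widehat x$ and on the exactness of the prefactor $|x|^{\alpha_0}$, which is what ultimately forces the limit to vanish.
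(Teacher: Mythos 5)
Your argument is correct, but it takes a genuinely different route from the paper's. The paper proves the lemma directly: for $x=0$ a crude H\"older bound on $J_p$, and for $x\neq0$ a Taylor expansion of $\phi$ at $x$, cancellation of the first-order terms $J_p(y_\alpha\cdot\nabla\phi(x))$ by the symmetry of the grid, Lemma \ref{lem:pineq3}, and then a midpoint-quadrature comparison of the grid sum with a continuum integral (error $\lesssim \|D^2 f\|_{L^\infty}h^2$, where \eqref{eq:ass} enters), at which point the integral estimate from Lemma A.4 in \cite{dTLi20} finishes the job. You instead exploit the exact homogeneity of $|x|^\beta$ and of $J_p$ to factor out $r^{\alpha_0}$ (resp.\ $|x|^{\alpha_0}$) with $\alpha_0=\beta(p-1)-p>0$, dispatch the regime $|x|\lesssim r$ with a trivial counting bound, and in the regime $r\ll|x|$ delegate all the cancellation to the already-proved Theorem \ref{thm:MVF} applied to $\psi_{\hat x}(w)=|\hat x+w|^\beta$, after verifying — correctly, and this is the key computation $\eta/\rho^{p/(p-1)}=\frac{h}{r^{p/(p-1)}}|x|^{1/(p-1)}\to0$ — that \eqref{eq:ass} is preserved under the rescaling. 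What your route buys: it makes transparent why $\beta>p/(p-1)$ is the right hypothesis (it is exactly $\alpha_0>0$, forcing the prefactor to vanish), and it avoids redoing the quadrature and cancellation estimates. What it costs: you need Theorem \ref{thm:MVF} \emph{uniformly} over the family $\{\psi_{\hat x}\}_{|\hat x|=1}$, which is not what the theorem states, so you must go into its proof; here your justification is slightly too weak as written, since uniform $C^2$-bounds alone do not yield a uniform $o_h(1)$. You should add that the error terms in the proofs of Theorem \ref{thm:MVF} and of Theorem 2.1 in \cite{dTLi20} for $p\in(1,2)$ are controlled by a modulus of continuity of $D^2\psi$ (or a uniform $C^3$-bound) \emph{and} a positive lower bound on $|\nabla\psi|$ — note the factor $c^{p-2}$ in Step 1 of the consistency proof, singular as $c\to0$ when $p<2$ — and that both are uniform for your family because $|\nabla\psi_{\hat x}(w)|=\beta|\hat x+w|^{\beta-1}\geq\beta\,2^{1-\beta}$ on $B_{1/2}$ and $\hat x\mapsto\psi_{\hat x}$ is continuous from the compact sphere into $C^3(\overline{B}_{1/2})$. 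It is worth noting that this normalization of the gradient ($|\nabla\psi_{\hat x}(0)|=\beta$) is precisely how your rescaling sidesteps the degeneracy that the paper's proof must track explicitly through $c=\beta|x|^{\beta-1}$; with that uniformity remark added, your proof is complete.
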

\begin{proof} 
If $x=0$, we have $|\phi(x+y_\alpha)-\phi(x)|=|y|^\beta= o(|y_\alpha|^{\frac{p}{p-1}}) $. Then
\[
|\MVPrh\phi(x)| \leq J_p(o(|r|^{\frac{p}{p-1}}))  \frac{1}{|B_r| r^{p}} \sum_{y_\alpha\in B_r} h^d  = o_r(1).
\]
Assume now that $x\not=0$ so that $\nabla \phi(x)\not=0$. We can use the symmetry of $J_p(y_\alpha\cdot \nabla \phi(x))$ for $y_\alpha\in B_r$ and Lemma \ref{lem:pineq3} to conclude that
\begin{equation*}
\begin{split}
\left| \sum_{y_\alpha\in B_r}  J_p(\phi(x+y_\alpha)-\phi(x)) \dd y\right| &= \left|\sum_{y_\alpha\in B_r} J_p( y_\alpha\cdot \nabla\phi (x) + \frac{1}{2}y_\alpha^T D^2 \phi(\xi_\alpha)y_\alpha ) \right| \\
\leq& \sum_{y_\alpha\in B_r} \Big||y_\alpha\cdot \nabla\phi (x)| + |y_\alpha|^2 \sup_{\xi\in B_{r}(x)}| D^2 \phi(\xi)| \Big|^{p-2} |y_\alpha|^2\sup_{\xi\in B_{r}(x)}| D^2 \phi(\xi)|  \dd y.
\end{split}
\end{equation*}
We may assume that $x$ lies in the $e_1$-direction and write $\nabla \phi (x) =\beta|x|^{\beta-1} e_1:=c e_1$  for some $c>0$.  We now claim that\footnote{ Here $a \lesssim b$ stands for $a\leq Cb$ where $C$ is a constant that may depend on $p$ and $d$ but not on $r,h$ or $x$.}
\begin{equation}\label{eq:oufff}
\begin{split}
\MVPrh\phi(x) 
&\lesssim   \frac{c^{p-2} h^d}{|B_r|} \sum_{y_\alpha\in B_r}\Big||\hat y_\alpha\cdot e_1| + c^{-1}r\sup_{\xi\in B_{r}(x)}| D^2 \phi(\xi)| \Big|^{p-2} \sup_{\xi\in B_{r}(x)}| D^2 \phi(\xi)|\\
&\leq \frac{c^{p-2} h^d}{|B_r|}  \sum_{y_\alpha\in B_r}\Big||\hat y_\alpha\cdot e_1| + c^{-1}rC(|x|+r)^{\beta-2}  \Big|^{p-2} C(|x|+r)^{\beta-2}  \\
&\leq \frac{c^{p-2}}{|B_r|}\left(\int_{B_{2r}}\Big||\hat y\cdot e_1| + c^{-1}rC(|x|+r)^{\beta-2}  \Big|^{p-2} C(|x|+r)^{\beta-2} \dd y \right)+ o_{r+|x|}(1).
\end{split}
\end{equation}
 Once this is proved, it only remains to to prove that the first term in \eqref{eq:oufff} goes to zero. This is the estimate obtained on page 24 in the proof of Lemma A.4 in \cite{dTLi20}, with the small difference that we here integrate over $B_{2r}$ instead of $B_r$.

We now explain how to obtain \eqref{eq:oufff}. Fix $r$ and $x$ and consider the function
\[
f(y)=\Big||\hat y\cdot e_1| + c^{-1}rC(|x|+r)^{\beta-2}  \Big|^{p-2} C(|x|+r)^{\beta-2}. 
\]
The midpoint quadrature rule applied to $f(y)$ yields
\[
\begin{split}
\Big|  \int_{\tilde{B}_r}f(\hat{y}) dy -h^d  \sum_{y_\alpha\in B_r}f(\hat{y}_\alpha)\Big|
&\lesssim      \|D^2 f\|_{L^\infty(\tilde{B}_r)} h^2 |\tilde{B}_r|.
\end{split}
\]
Upon multiplying with $ \frac{c^{p-2}}{|B_r|}$, inserting $f(y)$ and rearranging, we obtain 
\[
\begin{split}
&\frac{c^{p-2} h^d}{|B_r|}  \sum_{y_\alpha\in B_r}\Big||\hat y_\alpha\cdot e_1| + c^{-1}rC(|x|+r)^{\beta-2}  \Big|^{p-2} C(|x|+r)^{\beta-2}\\
&\lesssim \frac{c^{p-2}}{|B_r|}\int_{\tilde B_r}\Big||\hat y\cdot e_1| + c^{-1}rC(|x|+r)^{\beta-2}  \Big|^{p-2} C(|x|+r)^{\beta-2} \dd y +   c^{p-2}\|D^2 f\|_{L^\infty(\tilde{B}_r)} h^2\\
&\lesssim \frac{c^{p-2}}{|B_r|}\int_{B_{2r}}\Big||\hat y\cdot e_1| + c^{-1}rC(|x|+r)^{\beta-2}  \Big|^{p-2} C(|x|+r)^{\beta-2} \dd y +   c^{p-2}\|D^2 f\|_{L^\infty(\tilde{B}_r)} h^2.
\end{split}
\]
where we use that $\tilde B_r \subset B_{r+\sqrt{d}h}\subset B_{2r}$ for $r$ small enough, since  $h=o(r)$. The only thing left is to prove that the last term is $o_{r+|x|}(1)$. Differentiation of $f$ yields
\[
\|D^2 f\|_{L^\infty(\tilde{B}_r)} \lesssim \Big|c^{-1}rC(|x|+r)^{\beta-2}  \Big|^{p-4 } C(|x|+r)^{\beta-2}.
\]
Since  $h=o(r^{p/(p-1)})=o(r^{3/2})$, $\beta-2>0$, $p-3<0$ and $c=\beta|x|^{\beta-1}$, we obtain
\[
\begin{split}
 c^{p-2} \|D^2 f\|_{L^\infty(\tilde{B}_r)} h^2 & \lesssim c^{p-2}c^{4-p}(|x|+r)^{(\beta-2)(p-3)} r^{p-1}\\
&= c^{2}(|x|+r)^{(\beta-2)(p-3)} r^{p-1}\\
 &=|x|^{2(\beta-1)}(|x|+r)^{(\beta-2)(p-3)} r^{p-1}\\
 \end{split}
\]
If $|x|\leq r$ then 
$$
|x|^{2(\beta-1)}(|x|+r)^{(\beta-2)(p-3)} r^{p-1}\lesssim r^{2(\beta-1)}r^{(\beta-2)(p-3)}r^{p-1} = r^{(\beta-1)(2+p-3)-p+3+p-1} = r^3.
$$
Likewise, if $r\leq |x|$ then
$$
|x|^{2(\beta-1)}(|x|+r)^{(\beta-2)(p-3)} r^{p-1}\lesssim |x|^{2(\beta-1)}|x|^{(\beta-2)(p-3)} |x|^{p-1}= |x|^3.
$$
This shows \eqref{eq:oufff} and concludes the proof.

\end{proof}

\bibliographystyle{abbrv}
%
%%If bibliography file is in the same folder use:
%
\bibliography{Bibliography} % The references (bibliography) information are stored in the file named "Bibliography.bib"

\end{document}